\documentclass[natbib, final]{svjour3}       

\usepackage{color}
\usepackage{xcolor}
\usepackage{graphicx}
\usepackage{mathrsfs}


\usepackage{bm}
\usepackage{float}
\usepackage{framed}
\makeatletter
\usepackage[utf8]{inputenc}
\usepackage{amsfonts}
\usepackage{amssymb}
\usepackage{amsmath}
\usepackage{listings}
\usepackage{mathtools}
\usepackage{multirow}
\usepackage[flushleft]{threeparttable}
\usepackage{pdfpages}
\usepackage[normalem]{ulem}
\usepackage{comment}
\usepackage{soul}

\usepackage[breaklinks,colorlinks,citecolor=blue,linkcolor=blue]{hyperref}

\smartqed

\newcommand{\ind}{1\hspace{-2.1mm}{1}} 
\newcommand{\defeq}{\vcentcolon=} 
\newcommand{\prob}{\mathbb{P}}
\newcommand{\partder}[2]{\frac{\partial #1}{\partial #2}} 

\usepackage{pifont}

\numberwithin{equation}{section}
\numberwithin{figure}{section}
 \usepackage[nodayofweek]{datetime}

\definecolor{darkgreen}{rgb}{0,0.35,0}




\newcommand{\dif}{\ensuremath{\mathrm{d}}}

\newcommand{\T}{\ensuremath{\mathrm{\scriptscriptstyle T}}}
\usepackage{bm}

\newcommand{\alphab}{\ensuremath{\bm\alpha}}
\newcommand{\betab}{\ensuremath{\bm\beta}}


\usepackage{amsmath}


\newcommand{\dto}{\ensuremath{\overset{\mathrm{d}}{\rightarrow}}}


\usepackage{mathrsfs}










  \usepackage{color}

%
%

  
 \let\oldthebibliography=\thebibliography
 \let\oldendthebibliography=\endthebibliography


\journalname{}

\setlength{\textwidth}{\dimexpr\pdfpagewidth-2.7in}
\setlength{\textheight}{\dimexpr\pdfpagewidth+1in}
\authorrunning{M.~de Carvalho et al.}







\begin{document}
\title{Regression-type analysis for block maxima on block maxima}

\author{\mbox{Miguel de Carvalho \and Gon\c{c}alo dos Reis \and Alina~Kumukova}
}

\institute{M. de Carvalho  
           \at
           School of Mathematics, University of Edinburgh, The King's Buildings, Edinburgh, EH9 3FD, UK \\
           Tel.: +44-131-650-5054 \\
           \email{Miguel.deCarvalho@ed.ac.uk}           
           \and     
           G. dos Reis  \at
            School of Mathematics, University of Edinburgh, The King's Buildings, Edinburgh, EH9 3FD, UK, and \\
            {Centro de Matem\'atica e Aplica\c c$\tilde{\text{o}}$es (CMA), FCT, UNL, Quinta da Torre, 2829--516 Caparica, Portugal}
                      \and 
           A. Kumukova  \at
           Maxwell Institute for Mathematical Sciences School of Mathematics, University of Edinburgh, Edinburgh UK
}

\date{Received: date / Accepted: date \\ \copyright~The Author(s) 2021}

\maketitle

\begin{abstract}
  This paper devises a regression-type model for the situation where both the response and covariates are extreme. 
  The proposed approach is designed for the setting where both the response and covariates are themselves block maxima, and thus contrarily to standard regression methods it takes into account the key fact that the limiting distribution of suitably standardized componentwise maxima is an extreme value copula. An important target in the proposed framework is the regression manifold, which consists of a family of regression lines obeying the latter asymptotic result. To learn about the proposed model from data, we employ a Bernstein polynomial prior on the space of angular densities which leads to an induced prior on the space of regression manifolds. Numerical studies suggest a good performance of the proposed methods, and a finance real-data illustration reveals interesting aspects on the comovements of extreme losses between two leading stock markets. 
  \keywords{Bernstein polynomials, Block maxima, Extreme value copula, Joint extremes, Multivariate extreme value distribution, Quantile regression, Statistics of extremes}
\end{abstract}

\footnotesize

\normalsize

\section{Introduction}
Block maxima data---such as annual maxima---are a mainstay of
statistics of extremes. Whereas classical statistical modeling is
mostly concerned with inferences surrounding the bulk of a
distribution, the field of statistics of extremes deals with the
rather challenging situation of conducting inferences about the tail
of a distribution. The behavior of extreme values in large samples is
often mathematically tractable, and this tractability is often used to
build sound statistical methods for modeling risk and extreme values. As an
example of this asymptotic tractability, it is well known that if
$Y_1, \dots, Y_n$ is a random sample with sample maximum
$M_n = \max(Y_1, \dots, Y_n)$ and if there exist sequences
$\{a_n > 0\}$ and $\{b_n\}$ such that $(M_n - b_n) / a_n \dto Z$, then
$Z$ follows a GEV (Generalized Extreme Value) distribution with
location, scale, and shape parameters $\mu \in \mathbb{R}$,
$\sigma > 0$, and $\xi \in \mathbb{R}$ respectively; see, for
instance, \cite[Theorem~3.2.3]{embrechts1997}.  Details on the paradigm
of statistics of extremes can be found in monographs 
\citep[e.g.][]{coles2001, beirlant2004, dehaan2006, resnick2007} as well as review papers \citep[e.g.][]{davison2015}.

\vspace{0.2cm}
In this paper, we devise a regression-type method for the situation
where both the response and the covariates are themselves block
maxima. Here and below, the expression ``regression-type'' is used to
refer to the class of statistical models that relate the conditional
quantiles of a response with covariates via a joint
distribution---rather than by specifying a functional relation between
response and covariate as, for example, in quantile regression
\citep{koenker1978}. An important result in the field of statistics of
extremes---that will be fundamental for our developments---is that the
properly standardized vector of block maxima converges in distribution
to a so-called extreme value copula \citep{gudendorf2010}. Thus, a key
target in the proposed framework is what we will refer below as the
regression manifold, that is, a family of regressions lines that obeys
the latter large sample result.  Our methods thus take on board information
on the dependence structure between the extreme values so to assess
what effects block maxima covariates can have on a block maxima
response. To learn about the proposed model from data, we develop a
prior in the space of regression manifolds by resorting to a flexible
Bernstein polynomial prior on the space of angular densities as
recently proposed by \cite{hanson2017}. \vspace{0.2cm}

Our approach contributes to the literature on conditional modeling given large observed values \citep[e.g.][]{wang2011, cooley2012}, nonetheless, our focus differs from the latter papers in a number of important ways as we describe next. The main difference is that, as anticipated above, here the focus is on devising a regression framework for a block maxima response on block maxima covariate, whereas the latter papers focus mainly on using the conditional density as a way to make probabilistic statements about the likelihood of an extreme given the occurrence of another extreme. Since our main target of analysis is regression, our method has some links with
statistical approaches for nonstationary extremes \citep[e.g.][Section~6]{katz2013, eastoe2009, wang2009, yee2007, coles2001}; the most elementary version of approaches for nonstationary extremes aims to learn about how the limiting law of a suitably standardized block maxima response ($S_\mathbf{x}$) changes according to a covariate $\mathbf{x} = (x_1, \dots, x_p)^{\T}$, via the specification 
\begin{align}\label{gevx}
  (S \mid \mathbf{X} = \mathbf{x}) \sim \text{GEV}(\mu_{\mathbf{x}}, \sigma_{\mathbf{x}}, \xi_{\mathbf{x}}). 
\end{align}
Since the approach in \eqref{gevx} is built from the univariate theory of extremes it is not tailored for conditioning on another variable being extreme as it fails to take on board information from the dependence structure between the extremes. \vspace{0.2cm}

Additionally, the method proposed in this work is loosely related to 
quantile regression \citep{koenker1978}, whose original version consists in modeling the conditional quantile of a response $Y$ given a covariate $\mathbf{X} = (X_1, \dots, X_p)^{\T}$ in a linear fashion, that is
\begin{align}\label{qr}
  F^{-1}(q \mid \mathbf{x}) = \mathbf{x}^{\T} \betab_{q}, \qquad 0 < q < 1,
\end{align}
where $F^{-1}(q \mid \mathbf{x}) = \inf\{y: F(y \mid \mathbf{x}) \geq q\}$ and $F(y \mid \mathbf{x})$ is the distribution function of $Y \mid \mathbf{X} = \mathbf{x}$. Versions of quantile regression that aim to equip \eqref{qr} with the ability to extrapolate into the tail of $Y$ are often known as extremal quantile regression methods \citep[e.g.][]{chernozhukov2005}. While flexible and sturdy, such quantile regression-based approaches do not take into account information on the fact that the limiting joint distribution of suitably standardized componentwise maxima is an extreme value copula, and thus fail to be equipped with the ability to extrapolate into the joint tail. {The approach proposed in this paper will take such knowledge on the limiting joint distribution into consideration and will assume a conditional law that stems from such knowledge---rather than imposing a linear specification as in \eqref{qr}; yet, the proposed approach is not to be seen as a competitor to quantile regression but rather as a method based on some loosely related principles and specific to the context where we have a block maxima response and a block maxima covariate.}

\vspace{0.2cm}
The remainder of the paper unfolds as follows. In Section~\ref{model} we introduce the proposed model and Section~\ref{learn} devises an approach for learning about it from data. Section~\ref{simulation} reports the main findings of a Monte Carlo simulation study. We showcase the proposed methodology in a real data application to stock market data in Section~\ref{application}. Finally, in Section~\ref{discussion} we present closings remarks. Proofs and derivations can be found in the appendix, and further numerical experiments and other technical details are presented in the supplementary material. 

\newpage 
\section{Modelling limiting block maxima conditioned on block maxima}\label{model}

\subsection{Background on multivariate extremes}
Prior to introducing a regression of block maxima on block maxima we need to lay groundwork on multivariate extremes. Let $\{(Y_{i},\mathbf{X}_{i})\}_{i=1}^n$ be a sequence of independent random vectors with unit Fr\'echet marginal distributions, i.e. $\exp(-1 / z)$, for $z>0$. In our setup, $Y_i$ should be understood as a response, whereas $\mathbf{X}_i = (X_{1, i}, \dots, X_{p, i})$ should be understood as a $p$-dimensional covariate. Let the componentwise block maxima be $\mathbf{M}_n= (M_{n, y}, M_{n, x_1}, \dots, M_{n, x_p})$ with $M_{n, y} = \max \{ Y_1,\dots,Y_n \}$ and $M_{n, x_j} = \max(X_{j,1}, \dots, X_{j,n})$, for $j = 1, \dots, p$. Under this setup, it is well-known that 
the vector of normalized componentwise maxima $\mathbf{M}_n/n$ converges in distribution to a random vector $(Y, \mathbf{X})$ which follows a multivariate extreme value distribution with the joint distribution function \begin{align}
\label{GEV}
    G(y,\mathbf{x}) 
	= 
	\exp \{-V(y,\mathbf{x})\},
	\quad 
	y, x_1, \dots, x_{p} >0.
\end{align}
Here, $$V(y,\mathbf{x}) = d \int_{\Delta_d} \max \left( \dfrac{w_1}{y}, \dfrac{w_2}{x_1}, \dots, \dfrac{w_d}{x_{p}} \right) \, H(  \dif \mathbf{w}),$$ is the exponent measure; see, for instance, \cite{dehaan1977}, \cite{pickands1981}, and \citet[][Theorem~8.1]{coles2001}.
In addition, $H$ is a parameter of the multivariate extreme value distribution $G$ known as angular measure, which controls the dependence between the extreme values; specifically, $H$ is a probability measure on the unit simplex $\Delta_d = \{(w_1, \dots, w_d) \in [0,1]^d, \sum_{i=1}^d w_i = 1\} \subset \mathbb{R}^d$, with $d = p + 1$, and obeying the mean constraint 
\begin{align}
\label{momconstr}
    \int_{\Delta_d} \mathbf{w} \, H(\dif \mathbf{w}) = \dfrac{1}{d} \ind_d, 
\end{align}
where $\ind_d$ is a vector of ones in $\mathbb{R}^d$. If $H$ is absolutely continuous with respect to the Lebesgue measure then its density is given by the Radon--Nikodym derivative $h = \dif H / \dif \mathbf{w}$, for $\mathbf{w} \in \Delta_d$. 
\subsection{Regression manifold for block maxima on block maxima}
We are now ready to introduce our regression method for block maxima on block maxima. We define the regression manifold as the family of regression lines, 
\begin{equation}
  \label{lines}
  \mathscr{L} = \{L_q: 0 < q < 1\} \quad \text{with} \quad L_q = \{y_{q\mid \mathbf{x}}: \mathbf{x} \in (0, \infty)^p\}, 
\end{equation}
where 
\begin{align}
\label{genInv}
    y_{q\mid \mathbf{x}} 
    =
    \inf \left\{ y>0: G_{Y\mid \mathbf{X}}(y\mid \mathbf{x}) \geq q \right\},
\end{align}
 is a conditional quantile of a multivariate extreme
value distribution, with $q \in (0,1)$ and $\mathbf{x} \in  (0, \infty)^p$, and
$G_{Y|\mathbf{X}}(y \mid \mathbf{x})
=
\mathbb{P} \left\{ Y \leq y \mid \mathbf{X}=\mathbf{x} \right\}$ is a conditional multivariate extreme value distribution function. \par

In higher dimensions $G_{Y \mid \mathbf{X}}$ can be expressed with the help of a joint multivariate extreme value density $g_{Y , \mathbf{X}}$ and its expression has been derived by \cite{stephenson2005}. By applying Bayes' theorem, we deduce $G_{Y \mid \mathbf{X}}(y \mid \mathbf{x}) = \int_0^y g_{Y \mid \mathbf{X}}(z \mid \mathbf{x}) \, \dif z$ from $g_{Y , \mathbf{X}}$ with $g_{Y|\mathbf{X}}$ given as follows: 
\begin{align} 
\label{heavy}
	g_{Y|\mathbf{X}}(y \mid \mathbf{x})
    =
    \dfrac{ \exp\{ - V(y,\mathbf{x})\} \sum\limits_{i=1}^d \sum\limits_{j=1}^{n_{i}} (-1)^i \prod\limits_{\Lambda \in r_{ij}} V_{\Lambda}(y,\mathbf{x}) }{ \sum\limits_{i=1}^d \sum\limits_{j=1}^{n_{i}} (-1)^i \int\limits_0^{\infty} \exp\{ - V(y,\mathbf{x})\} \prod\limits_{\Lambda \in r_{ij}} V_{\Lambda}(y,\mathbf{x})  \, \dif y },
    \quad y, \mathbf{x} >0,
\end{align}
where $V_{\Lambda}(y,\mathbf{x})$ corresponds to mixed partial derivative of the exponent measure $V(y,\mathbf{x})$ with respect to the $l$th components of $(y,\mathbf{x})$ such that $l \in \Lambda$, $n_{i}$ is the number of partitions of $\{1,\dots,d\}$ of size $i=1,\dots,d$, and $r_{ij}$ is the $j$th partition of $\{1,\dots,d\}$ of size $i$, with $1 \leq j \leq n_{i}$.

In the particular case where we have a single covariate $(p=1)$, the regression manifold $\mathscr{L}$ in \eqref{lines} can be derived using properties of bivariate copulas; see Appendix~A. Accordingly, for an absolutely continuous angular measure $H$ (with density $h)$, it follows that
\begin{align} 
\label{condD}
    G_{Y \mid X}(y \mid x) = 
    2
    \exp \left\{ - 2 \int_0^1 \max \left\{ \dfrac{w}{x}, \dfrac{1-w}{y}  \right\} h(w)   \, \dif w + \frac1x \right\} \int_{\omega(x, y)}^1 w h(w)  \, \dif w,
    \quad x,y>0,
\end{align}
where $\omega(x, y) = x / (x + y)$, and $y_{q\mid x}$ is then calculated via \eqref{genInv}.

\begin{figure}
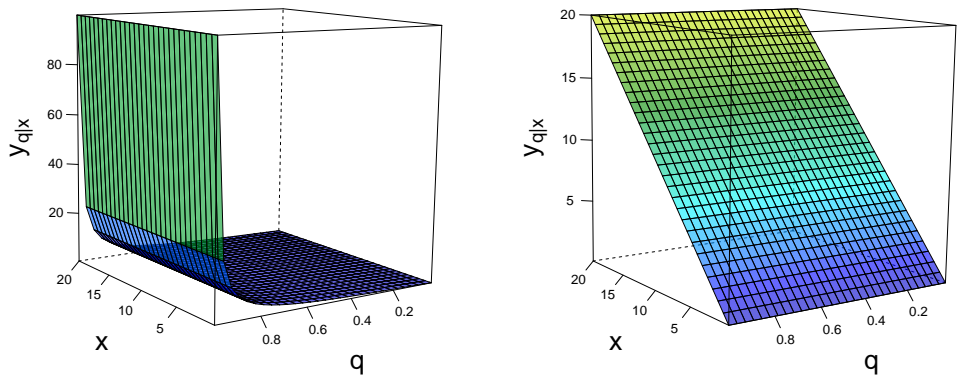

  \centering
  \includegraphics[width=0.5\textwidth]{independent.pdf} \hspace{-0.8cm}
  \includegraphics[width=0.5\textwidth]{perfectly_dependent.pdf}  \vspace{-0.2cm}
  \caption{\label{inddep} Regression mainfolds for cases of complete independence (left) and perfect dependence (right).}
\end{figure}
We now derive regression manifolds $\mathscr{L}$ in \eqref{lines} for the cases of independent and perfectly dependent extremes, which are depicted in Figure~\ref{inddep}. When extremes are independent, $H$ assigns equal mass to the boundaries of the simplex, which also corresponds to asymptotic independence of $\mathbf{X}$ and $Y$ \citep{husler2009}, resulting in
\begin{align}\label{regind}
  L_q = \{ -1/\log q: \mathbf{x} \in (0, \infty)^p \},
\end{align}
with
\begin{align*}
    G_{Y\mid \mathbf{X}}(y\mid \mathbf{x}) 
    =
    \int_0^y
    \dfrac{\exp(- z^{-1} - x^{-1}_1 - \dots - x_{p}^{-1}) (-z^{-2}) \prod\limits_{j=1}^{p} (-x_j^{-2})}{ \exp(- x^{-1}_1 - \dots - x_{p}^{-1}) \prod\limits_{j=1}^{p} (-x_j^{-2})} \, \dif z 
    =
    \exp({-1 / y}),
    \quad 
    y > 0.
\end{align*}

\noindent When extremes are perfectly dependent, the angular measure $H$ assigns all its mass to the barycenter of the simplex, $d^{-1}\ind_d$, leading to $G(y,\mathbf{x}) = \exp \{ -  \max (y^{-1}, x_1^{-1}, \dots, x_p^{-1}) \}$. Taking derivatives of $G(y,\mathbf{x})$ in this case is non-trivial, and we replace the maximum function with a soft maximum \citep{cook2011} so to obtain an approximation for the shape of the regression lines for perfectly dependent extremes. Thus, the soft maximum approximation for the regression lines for perfectly dependent extremes is 
\begin{align}\label{softy}
  \tilde L_q = \{\min(x_1,\dots,x_p): \mathbf{x} \in (0, \infty)^p\}.
\end{align}
Thus, regression lines for the case of perfectly dependent extremes do not depend on $q$. See~Appendix~B for the derivation, and Figure~\ref{inddep} for a chart of its regression manifold.

We end this section with comments on properties of regression manifolds. Trivially, regression lines obey the standard properties of quantile functions \citep[][Chap.~21]{vaart1998}. Less trivial is however the fact that, monotone regression dependence of bivariate extremes \cite[][Theorem~1]{guillem2000} implies that regression lines $y_{q \mid x}$ in \eqref{genInv} are non-decreasing in $x$, for $p=1$, under some mild  assumptions.
\begin{proposition}[Monotonicity of regression manifold]
\label{prop7}
    Let $G_{Y|X}(y \mid x) =
\mathbb{P} \left\{ Y \leq y \mid X=x \right\}$ be a conditional bivariate extreme value distribution function, which we assume to be jointly continuously differentiable and strictly increasing in $y$ for any fixed $x \in (0,\infty)$.
    Then, the regression lines for bivariate extremes
	$(0,\infty)\ni x\mapsto y_{q\mid x}$
	are non-decreasing for all $q\in(0,1)$.
      \end{proposition}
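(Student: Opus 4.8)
The plan is to combine the positive (monotone) regression dependence of bivariate extreme value distributions with the elementary relationship between stochastic ordering and ordering of quantiles.

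First, I would invoke \cite[][Theorem~1]{guillem2000}: for a bivariate extreme value distribution, $Y$ is stochastically increasing in $X$, i.e. the map $(0,\infty) \ni x \mapsto G_{Y|X}(y \mid x)$ is non-increasing for every fixed $y > 0$. In the present parametrization the margins are unit Fr\'echet, so this applies directly; alternatively, one notes that the property is invariant under increasing marginal transformations, which is all that distinguishes our setting from the one in \cite{guillem2000}.

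Second, fix $q \in (0,1)$ and two covariate values $x_1 < x_2$, and write $y_i = y_{q\mid x_i}$ for $i=1,2$. Because $y \mapsto G_{Y|X}(y \mid x)$ is continuous (by the assumed joint continuous differentiability) and strictly increasing in $y$, the infimum defining $y_{q\mid x}$ in \eqref{genInv} is attained, is characterized by $G_{Y|X}(y_{q\mid x} \mid x) = q$, and satisfies the equivalence $y_{q\mid x} \le y \iff q \le G_{Y|X}(y \mid x)$. Evaluating at $y = y_2$, Step~1 with $x_1 < x_2$ gives $G_{Y|X}(y_2 \mid x_1) \ge G_{Y|X}(y_2 \mid x_2) = q$, and the equivalence just stated then yields $y_1 = y_{q\mid x_1} \le y_2 = y_{q\mid x_2}$. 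Since $x_1 < x_2$ and $q \in (0,1)$ were arbitrary, $x \mapsto y_{q\mid x}$ is non-decreasing for every $q$.

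The only genuine subtlety — and the step I would scrutinize most — is confirming that the stated hypotheses (joint continuous differentiability and strict monotonicity in $y$) license the clean quantile characterization used above, and that the inequality direction borrowed from \cite{guillem2000} is indeed ``conditional distribution function non-increasing in the conditioning variable''; once these are pinned down, the argument is a two-line deduction. It is also worth remarking that absolute continuity of $H$ plays no role here, so the conclusion holds in the general bivariate setting and not merely under the representation \eqref{condD}.
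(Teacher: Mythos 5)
Your proof is correct, and it reaches the conclusion by a genuinely different route from the paper's. Both arguments rest on the same external ingredient---monotone regression dependence of bivariate extreme value distributions \citep[][Theorem~1]{guillem2000}, i.e.\ that $x \mapsto G_{Y\mid X}(y \mid x)$ is non-increasing for each fixed $y$---but you conclude via the elementary Galois-connection property of generalized inverses, $y_{q\mid x} \le y \iff q \le G_{Y\mid X}(y\mid x)$, evaluated at $y = y_{q\mid x_2}$. The paper instead treats $y_{q\mid x}$ as an implicit function of $x$ defined by $G_{Y\mid X}(y\mid x) = q$ and applies the implicit function theorem, obtaining
\begin{align*}
\partder{}{x}\, y_{q\mid x}
= - \, \dfrac{\partder{}{x} G_{Y\mid X}(y\mid x)}{\partder{}{y} G_{Y\mid X}(y\mid x)} \;\ge\; 0,
\end{align*}
the numerator being non-positive by regression dependence and the denominator positive by strict monotonicity in $y$; this is precisely \eqref{IFT}. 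Your route is more elementary and, as you note, needs strictly less: it uses only continuity and strict monotonicity in $y$ to identify $y_{q\mid x}$ as the exact root of $G_{Y\mid X}(\cdot\mid x) = q$ (and the quantile-ordering step itself holds for generalized inverses of arbitrary distribution functions), whereas the joint continuous differentiability hypothesis in the proposition exists essentially to license the implicit function theorem. What the paper's calculus route buys in exchange is the explicit derivative formula, which is reused later to define and sign the marginal effects $\beta_{q,x}$ in \eqref{hbeta}; your argument establishes monotonicity of $x \mapsto y_{q\mid x}$ but does not yield that formula. Your closing observations---that the relevant direction of the inequality from \cite{guillem2000} is ``conditional distribution function non-increasing in the conditioning variable,'' and that absolute continuity of $H$ plays no role---are both correct and consistent with how the paper states and uses the result.
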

      \begin{proof}
        See Appendix~C.
      \end{proof}
An example of a bivariate extreme value distribution satisfying
the assumptions of Proposition~\ref{prop7} is the Logistic
model, whose regression manifold is discussed in Example~\ref{logistic} below.
      
\subsection{Parametric instances of regression manifolds}
\noindent We now consider some parametric instances of regression manifolds as defined in \eqref{lines}. Charts of regression manifolds for these parametric examples are depicted in Figure~\ref{F2}. In Appendix~D, we show that for sufficiently large $x$, the following linear approximation holds for the regression manifold, $L_q = \{y_{q\mid x}: x \in (0, \infty)\}$, of the Logistic model from Example~\ref{logistic} with  
        \begin{equation}
          \label{RLLogisticA}
          y_{q\mid x} 
	= \gamma_q + \beta_q x + o{(x)}. 
        \end{equation}
Here, $\gamma_q$ and $\beta_q$ are functions of both $\alpha$ and $q$ (see \eqref{gammaq} and \eqref{betaq}), and $o{(x)}$ is little-$o$ of $x$ in Bachmann--Landau notation; the numerical accuracy of this approximation is illustrated in the supplementary material.

\begin{example}[Logistic]\normalfont
  \label{logistic}
  An instance of the Logistic regression manifold can be found in Figure~\ref{F2} (top). It stems from the Logistic bivariate extreme value  distribution function given by 
	\begin{align*}
	    G(x,y) 
	    = 
	    \exp \{ - ( x^{-1/\alpha} + y^{-1/\alpha} )^{\alpha} \},
	    \quad
	    x,y>0,
	\end{align*}
    where $\alpha \in (0,1]$ characterizes the dependence between extremes: The closer $\alpha$ is to $0$, the stronger the dependence, with the limit $\alpha \to 0$ corresponding to the case of perfect dependence. The conditional distribution of $Y$ given $X$ is  
	\begin{align*}
	    G_{Y\mid X}(y\mid x)
	    = 
	    G(x,y) ( x^{-1/\alpha} + y^{-1/\alpha})^{\alpha - 1} x^{1-1/\alpha}\exp(1/x),
	    \quad 
	    x,y > 0,
	\end{align*}
	thus leading to the following family of regression lines $L_q$ in \eqref{lines} where
	\begin{align}
          \label{RLLogistic}
			y_{q\mid x}
			=
			\left[ \left\{ \dfrac{1-\alpha}{\alpha} x W \left( \dfrac{\alpha}{1-\alpha} x^{-1} e^{ {\alpha}/(1-\alpha) x^{-1}} q^{{\alpha}/(\alpha-1)} \right) \right\}^{1/\alpha} - 1 \right]^{-\alpha} x,
	\end{align}
	and $x > 0$. Here, $W$ is the so-called Lambert $W$ function, that is, the multivalued analytic inverse of $f(z) = z \exp(z)$ with $z$ denoting a real or complex number \citep{BorweinLindstrom2016LambertFunc}; see the supplementary material for further details. As it can be seen from Figure~\ref{F2} (top), the regression lines obey what is claimed in Proposition~\ref{prop7} in the sense that $(0,\infty)\ni x\mapsto y_{q\mid x}$ are non-decreasing for all $q \in (0, 1)$.
        
\begin{figure}
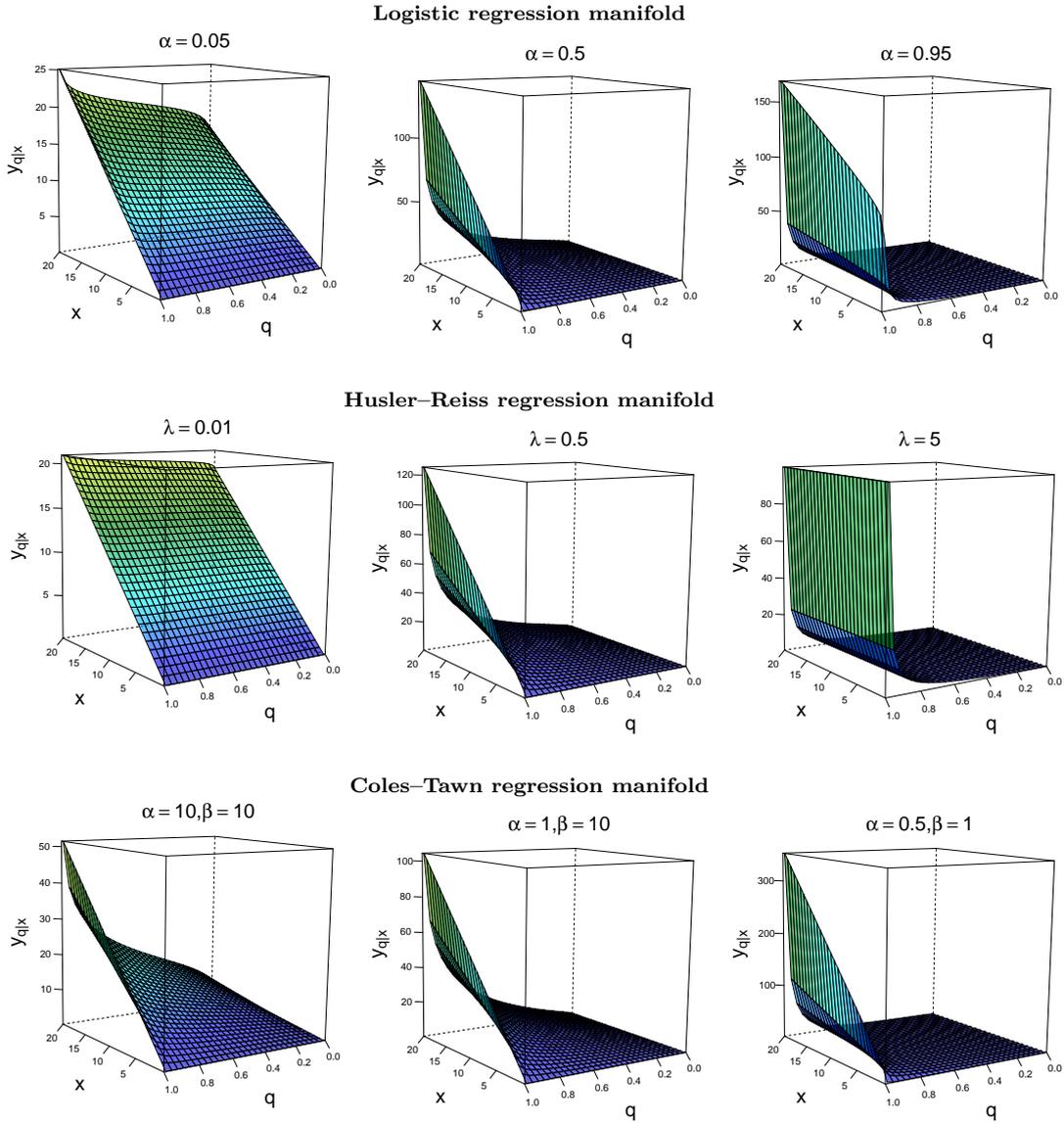

  \begin{center} 
  \begin{footnotesize}
    \textbf{Logistic regression manifold} \\
    \end{footnotesize}   \vspace{-0.65cm} \hspace{-1cm}
    \begin{minipage}[c]{4cm}
    \centering \includegraphics[width=1.4\textwidth]{log_005.pdf}
    \text{\hspace{0cm}}
  \end{minipage} 
   \hspace{0.65cm}
  \begin{minipage}[c]{4cm}
    \includegraphics[width=1.4\textwidth]{log_05.pdf}
    \end{minipage}
  \hspace{0.65cm}
    \begin{minipage}[c]{4cm}
    \includegraphics[width=1.4\textwidth]{log_095.pdf}
  \end{minipage} 
\end{center}\vspace{-1cm}
\begin{center}
\begin{footnotesize}
    \textbf{Husler--Reiss regression manifold} \\ \vspace{-0.65cm} \hspace{-1cm}
\end{footnotesize}
    \begin{minipage}[c]{4cm}
    \centering \includegraphics[width=1.4\textwidth]{hr_001.pdf}
    \text{\hspace{0cm}}
  \end{minipage} 
   \hspace{0.65cm}
  \begin{minipage}[c]{4cm}
    \includegraphics[width=1.4\textwidth]{hr_05.pdf}
    \end{minipage}
  \hspace{0.65cm}
    \begin{minipage}[c]{4cm}
    \includegraphics[width=1.4\textwidth]{hr_5.pdf}
  \end{minipage} 
  \end{center} \vspace{-1cm}
  \begin{center} 
  \begin{footnotesize}
  \textbf{Coles--Tawn regression manifold} \\ \vspace{-0.65cm} \hspace{-1cm}
  \end{footnotesize}
    \begin{minipage}[c]{4cm}
    \centering \includegraphics[width=1.4\textwidth]{ct_10_10.pdf}
    \text{\hspace{0cm}}
  \end{minipage} 
   \hspace{0.65cm}
  \begin{minipage}[c]{4cm}
    \includegraphics[width=1.4\textwidth]{ct_1_10.pdf}
    \end{minipage}
  \hspace{0.65cm}
    \begin{minipage}[c]{4cm}
    \includegraphics[width=1.4\textwidth]{ct_05_1.pdf}
  \end{minipage} 
  \end{center}
  \vspace{-1cm}
\caption{Regression manifold $\mathscr{L}$, as defined in \eqref{lines}, for bivariate Logistic, Husler--Reiss, and Coles--Tawn models (top to bottom) with strong dependence, intermediate and weak extremal dependence (left to right).}
\label{F2}
\end{figure}
\end{example}

\begin{example}[Husler--Reiss]\normalfont
  \label{husler_reiss}
  An instance of the Husler--Reiss regression manifold is depicted in Figure~\ref{F2} (middle). It follows from the Husler--Reiss bivariate extreme value distribution function which has the following form:
	\begin{align*}
	    G(x,y) 
	    = 
	    \exp \left\{ - x^{-1} \Phi\left( \lambda + \dfrac{1}{2 \lambda} \log \dfrac{y}{x} \right) - y^{-1} \Phi\left( \lambda + \dfrac{1}{2 \lambda} \log \dfrac{x}{y} \right) \right\},
	    \quad
	    x,y>0,
	\end{align*}
        where $\Phi$ is the standard Normal distribution function and $\lambda \in (0,\infty]$ is the parameter regulating the dependence between extremes: $\lambda \to 0$ corresponds to perfect dependence and the limit case $\lambda \to \infty$ corresponds to complete independence. The family of regression lines $L_q$ in \eqref{lines} for this model does not have explicit representations and is obtained using \eqref{genInv} with
	\begin{align*}
	    G_{Y\mid X}(y\mid x)
	    &= 
	    \left[ \Phi\left( \lambda + \dfrac{1}{2 \lambda} \log \dfrac{y}{x} \right) + { \dfrac{1}{2 \lambda} \phi \left( \lambda +  \dfrac{1}{2\lambda} \log \dfrac{y}{x} \right)} -
	     \dfrac{
	     x y^{-1}}{2\lambda} \phi \left( \lambda + \dfrac{1}{2 \lambda} \log \dfrac{x}{y} \right)  \right]\\
	    &\quad 
	    \times
	    G(x,y) \exp(1 / x),
	    \quad 
	    x,y > 0,
	\end{align*}
    where $\phi$ is the standard Normal density function.  
\end{example}

\begin{example}[Coles--Tawn]\normalfont
  \label{coles_tawn}
    An instance of the Coles--Tawn regression manifold is depicted in Figure~\ref{F2} (bottom). It follows from the Coles--Tawn bivariate extreme value  distribution function which has the following form:
	\begin{align*}
	    G(x,y) 
	    = 
	    \exp[ - x^{-1} \{ 1 - \text{Be} ( q; \alpha + 1, \beta)\}- y^{-1} \text{Be}( q; \alpha, \beta + 1 )
	    ],
	    \quad
	    x,y>0,
	\end{align*}
    where $\text{Be}(q;a,b)$ is the distribution function of a Beta distribution function with parameters $a,b > 0$, $q = \alpha y^{-1}/(\alpha y^{-1} + \beta x^{-1})$ and $\alpha,\beta > 0$ are the parameters regulating dependence between extremes; the case $\alpha = \beta=0$ corresponds to complete independence, whereas $\alpha = \beta \to \infty$ corresponds to perfect dependence. For fixed $\alpha$ ($\beta$) the strength of dependence increases with $\beta$ ($\alpha$). The family of regression lines $L_q$ in \eqref{lines} for this model does not have an explicit representation and is calculated using \eqref{genInv}, for $x,y > 0$, with
	\begin{align*}
	    G_{Y\mid X}(y\mid x)
	    = 
	    \Big[ 1 - \text{Be}\left( q; \alpha + 1, \beta \right) 
	    & + \dfrac{(\alpha + 1)\beta}{ \gamma} \text{be}\left( q; \alpha + 2, \beta + 1 \right) 
	    \\ 
	    &-
	    \dfrac{x}{y} 
	    \dfrac{\alpha(\beta + 1)}{\gamma} \text{be}\left( q; \alpha + 1, \beta + 2 \right)
	    \Big]
	    x^{-2} G(x,y) \exp\{ x^{-1} \} ,
	\end{align*}
    where $\text{be}(q;a,b)$ is the density function of the Beta distribution with parameters $a,b>0$ and $\gamma =(\alpha + \beta + 2)(\alpha + \beta + \textcolor{blue}{1})$.  
\end{example}


\noindent Section~\ref{model} introduced our key parameter of interest---regression manifolds for block maxima on block maxima, i.e.~$\mathscr{L}$ as in \eqref{lines}---, it commented on some of its properties, and gave examples of parametric instances. 
Next, we discuss Bayesian inference for $\mathscr{L}$. 

\section{Learning about regression manifolds via Bernstein polynomials}
\label{learn}
\subsection{Induced prior on the space of regression manifolds for $p=1$}\label{bern}
In this section we discuss how to learn about regression manifolds from data. To achieve this, we resort to the Bayesian paradigm and will define an induced prior on the space of regression manifolds by resorting to a flexible prior on the space of all angular measures that was recently proposed by \cite{hanson2017}. To lay the groundwork, we start by defining the setup of interest. Let $\{(Y_i,\mathbf{X}_i)\}_{i=1}^n$ be a sequence of independent random vectors with unit Fr\'echet marginal distributions; define  $R_i = Y_i + \sum_{j=1}^p X_{j,i}$ and $\mathbf{W}_i = (Y_i , \mathbf{X}_i)/ R_i$, known as the pseudo-angular decomposition of the observations. In \cite{dehaan1977} it is shown the equivalence of the convergence of normalized componentwise maxima to $G$ to the following weak convergence of measures 
\begin{align*}
    \mathbb{P} \left\{ \mathbf{W} \in \cdot \mid R > u \right\} \dto 
    H(\cdot), \quad \text{as} \; u \rightarrow \infty.
\end{align*}
This means that when the radius $R$ is sufficiently large, the pseudo-angles $\mathbf{W}$ are nearly independent of $R$ and follow approximately a distribution associated with the angular measure $H$. Thus, to learn about $L_q$ in \eqref{lines}, we first learn about $H$ based on $k = |\{\mathbf{W}_i: R_i > u, i = 1, \dots, n\}|$ exceedances above a large threshold $u$, with a methodology we describe next.

Following \cite{hanson2017}, we model the angular density $h$ via a Bernstein polynomial defined on the unit simplex $\Delta_d$, and hence basis polynomials are Dirichlet densities. More precisely, our specification for the angular density is 
\begin{align}
\label{berndens}
    h(\mathbf{w})
    =
    \sum\limits_{|\alphab|=J} \pi_{\alphab} \, \text{dir}(\mathbf{w}; {\alphab}), \quad 
\end{align}
with $\mathbf{w} \in \Delta_d$. Here, $\text{dir}_d$ is the density of a Dirichlet distribution supported on $\Delta_d$, that is, 
\begin{equation*}
\text{dir}(\mathbf{w}; \alphab) = 
    \dfrac{\Gamma(|\alphab|)}{\prod\limits_{i=1}^d \Gamma(\alpha_i)} \prod_{i=1}^d w_i^{\alpha_i-1},
\end{equation*}
where $\alphab \in \mathbb{N}^d$ (with $\mathbb{N}:=\{1,2,3,\dots\}$), $|\alphab|=\sum_{j=1}^d \alpha_j$, and $\Gamma(z) = \int_0^\infty x^{z - 1} \exp(-x) \, \dif x$ is the gamma function; finally in \eqref{berndens} the $\pi_{\alphab} > 0$ are weights and $J \in \mathbb{N}$ controls the order of the resulting polynomial.

To ensure that the resulting $h(\mathbf{w})$ is a valid angular density (i.e.~an actual density satisfying the moment constraint \eqref{momconstr}), the weights must obey  
\begin{align}
  \label{c1}
  \sum\limits_{|\alphab|=J} \pi_{\alphab} = 1, \quad 
  \sum\limits_{i=1}^{J-d+1} i \sum\limits_{|\alphab|=J, \alpha_j=i} \pi_{\alphab} = \dfrac{J}{d}, 
\end{align}
for $j=1,\dots,d$. The normalization and mean constraints in \eqref{c1} imply that there are $m - d$ parameters, where $m = {J-1 \choose d-1}$ is the number of basis functions in \eqref{berndens}; denote such free weights as $\{\pi_{\alphab}: \alphab \in \mathscr{F}\},$ where $\mathscr{F} = \{\alphab \in \mathbb{N}^d, |\alphab| = J, \text{ and } \alphab \not\in \{\mathbf{a}_1, \dots, \mathbf{a}_d\}\}$ with $\mathbf{a}_j$ being a $J$-vector of ones except element $i$ is $J - d + 1$. Similarly to \cite{hanson2017}, 
we parametrize the free weights via a generalized logit transformation that implicitly defines the auxiliary parameters  $\pi_{\alphab}'$, that is,
\begin{equation}\label{auxi}
  \pi_{\alphab} = \frac{\exp(\pi_{\alphab}')}{d + \sum_{\tilde \alphab \in \mathscr{F}}\exp(\pi_{\tilde \alphab}')}.
\end{equation}
Now, to induce a prior in the space of regression manifolds we plug-in the angular density in \eqref{berndens} into \eqref{heavy}; subsequent integration with respect to $y$ and inversion of $G_{Y|\mathbf{X}}(y|\mathbf{x})$ leads to an induced prior on the space of  regression lines $L_q$. In detail, to define a prior on the space of  regression manifolds we proceed as follows. The Bernstein polynomial prior in \eqref{berndens} induces a prior on the space of  regression lines $L_q = \{{y}_{q \mid x}: x \in (0,\infty)\}$, where ${y}_{q \mid x}$ is a solution to equation, ${G}_{Y\mid X} (y\mid x)
    =
  q$, for $q \in (0,1)$, where
\begin{align}
    &{G}_{Y\mid X} (y\mid x) \nonumber
    \\ 
    &= \label{indGYX}
    \dfrac{2}{J} \exp\Bigg\{ 
    -\dfrac{2}{J} \sum\limits_{|\alpha|=J} \pi_{\alphab} 
    [ \alpha_1 x^{-1} 
        \{ 1 -  \text{Be}(\omega(x, y);\alpha_1 + 1,\alpha_2)
        \} 
    + \alpha_2 y^{-1} \text{Be}(\omega(x, y);\alpha_1,\alpha_2+1)] \Bigg\} \nonumber
    \\
    &\quad
    \times
    \sum\limits_{|\alphab|=J} \pi_{\alphab} \alpha_1 \{1 -  \text{Be}(\omega(x, y);\alpha_1+1,\alpha_2) \} \exp(1/x),
\end{align}
where $\omega(x, y) = x / (x + y)$, for $x, y > 0$. Finally, to complete the model specification we set the following Dirichlet prior on the free parameters
\begin{equation*}
  p(\pi_{\alphab}) \propto \text{dir}(\mathbf{w} \mid c \, \ind_m) \prod_{j = 1}^d I\left\{\sum\limits_{i=1}^{J-d+1} i \sum\limits_{|\alphab|=J, \alpha_j=i} \pi_{\alphab} = \dfrac{J}{d}\right\}, 
\end{equation*}
where $I$ is the indicator function, which accordingly induces a prior on the auxiliary parameters $\pi_{\alphab}'$ in \eqref{auxi}.

\subsection{Induced prior on the space of regression manifolds for $p>1$}\label{higher_dim}
When $p \geq 2$ we proceed as in Section~\ref{bern}, that is our induced prior in the space of regression lines is again induced by the Bernstein polynomial prior for the angular density in \eqref{berndens}, and it follows by solving ${G}_{Y\mid \mathbf{X}} (y\mid \mathbf{x}) = q$, with $h(\mathbf{w})$ as in \eqref{berndens}. The expression for the conditional multivariate extreme value distribution ${G}_{Y\mid \mathbf{X}} (y\mid \mathbf{x})$ for $p \geq 2$ is however not as manageable as the one in \eqref{indGYX}.
We thus propose an approach for learning about the regression manifold $\mathscr{L}$, as defined in \eqref{lines}, via an approximation to the conditional multivariate GEV density.
Let $\mathbf{u}=(y,\mathbf{x}) \in (0,\infty)^d$ and $\mathbf{u}(t)=(t,\mathbf{x}) \in (0,\infty)^d$ with $\lVert \mathbf{u} \rVert = y + \sum_{i=1}^p x_i > u$ for a large threshold $u$.

Then, following \citet[Proposition~1]{cooley2012} the conditional density of a multivariate extreme value distribution can be approximated, via a point process representation for extremes, as follows
\begin{equation}
\label{approxi}
    g_{Y \mid \mathbf{X}} (y \mid \mathbf{x})
    \approx  
    \dfrac{\lVert \mathbf{u} \rVert^{-d-1} h(\mathbf{u}/\lVert \mathbf{u} \rVert)}{\int_0^{\infty} \lVert \mathbf{u}(t) \rVert^{-d-1} h(\mathbf{u}(t)/\lVert \mathbf{u}(t) \rVert) \, \dif t}.
  \end{equation}
 
\renewcommand{\thefigure}{4.1}
\begin{figure}
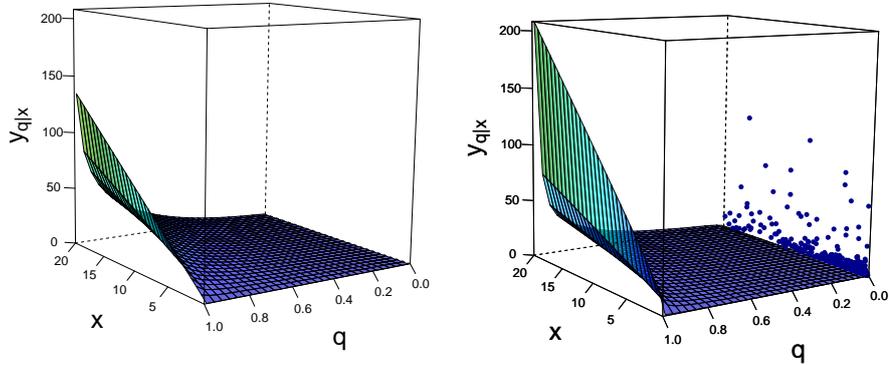

\begin{center}
    \begin{center}
      \begin{footnotesize}
        \textbf{Scenario 1}---strongly dependent extremes: Husler--Reiss model
      \end{footnotesize}
  \end{center}
  \vspace{-1cm}
    \begin{minipage}[c]{7cm}\hspace{1cm}
    \includegraphics[width=1\textwidth]{hr_strong_dep_true.pdf}
    \text{\hspace{0cm}}
  \end{minipage} 
  \begin{minipage}[c]{7cm}
    \includegraphics[width=1\textwidth]{hr_strong_dep_estimate.pdf}
  \end{minipage}
  \vspace{-1cm}
    \begin{center}
  \begin{footnotesize}
    \textbf{Scenario 2}---weakly dependent extremes: Logistic model
  \end{footnotesize}
  \end{center}
  \vspace{-1cm}
    \begin{minipage}[c]{7cm}\hspace{1cm}
    \includegraphics[width=1\textwidth]{log_weak_dep_true.pdf}
    \text{\hspace{0cm}}
  \end{minipage} 
  \begin{minipage}[c]{7cm}
    \includegraphics[width=1\textwidth]{log_weak_dep_estimate.pdf}
  \end{minipage}
      \vspace{-1cm}
    \begin{center}
  \begin{footnotesize}
    \textbf{Scenario 3}---asymmetric intermediate dependence: Coles--Tawn model
  \end{footnotesize}
  \end{center}
  \vspace{-1cm}
    \begin{minipage}[c]{7cm}\hspace{1cm}
    \includegraphics[width=1\textwidth]{ct_inter_dep_true.pdf}
    \text{\hspace{0cm}}
  \end{minipage} 
  \begin{minipage}[c]{7cm}
    \includegraphics[width=1\textwidth]{ct_inter_dep_estimate.pdf}
  \end{minipage}
      \vspace{-1cm}
\end{center}
\caption{True regression manifold $\mathscr{L}$, as defined in \eqref{lines}, along with its posterior mean estimate obtained using the methods from Section~\ref{learn} for Husler--Reiss, Logistic, and Coles--Tawn bivariate extreme value models (top to bottom) on a single-run experiment. Simulated data are overlaid on one of the faces of the box.
}
\label{RLS}
\end{figure}

An induced prior for $g$ can be devised by plugging the approximation in \eqref{approxi} with the specification from Section~\ref{bern}, which leads to the following prior for the conditional multivariate extreme value distribution function, 
\begin{align}
\label{cooley-bernstein}
    G_{Y \mid \mathbf{X}} (y \mid \mathbf{x})
  \approx 
    \dfrac{\sum\limits_{|\alpha|=J} \pi_{\alphab} / B_{\alpha} \prod\limits_{i=1}^p x_i^{\alpha_i-1}  \int_0^y t^{\alpha_d-1} \lVert \mathbf{u}(t) \rVert^{-J-1} \, \dif t}{\sum\limits_{|\alpha|=J} \pi_{\alphab} / B_{\alpha} \prod\limits_{i=1}^p x_i^{\alpha_i-1} \int_0^{\infty} t^{\alpha_d-1} \lVert \mathbf{u}(t) \rVert^{-J-1} \, \dif t},
\end{align}
where $B_{\alpha} =  \prod_{i=1}^d \Gamma(\alpha_i) / \Gamma(|\alphab|)$ is the multivariate beta function. Hence, we can learn about the regression manifold $\mathscr{L}$ by estimating $\pi_{\alphab}$ as described in Section \ref{bern}, that is, by plugging in the Bernstein polynomial estimates \eqref{berndens} into the approximation \eqref{approxi} and numerically inverting  \eqref{cooley-bernstein} with respect to $y$. This strategy 
is illustrated numerically in the supplementary material.


\section{Simulation study}\label{simulation}

\subsection{Preliminary experiments}\label{prelim}
\label{one-shot}
\noindent We study the finite sample performance of the proposed methods under three data generating scenarios that were introduced in Section~\ref{model}; see Examples~\ref{logistic}--\ref{coles_tawn}. Specifically, we simulate data as follows:  
\begin{itemize}
     \item \textbf{Scenario 1}---strongly dependent extremes: Husler--Reiss model with $\lambda=0.1$.
    \item \textbf{Scenario 2}---weakly dependent extremes: Logistic model with $\alpha=0.9$.
    \item \textbf{Scenario 3}---asymmetric intermediate dependence: Coles--Tawn model with $\alpha = 0.5$, $\beta = 100$.
\end{itemize}
For now we focus on illustrating the methods in a single-run experiment; a Monte Carlo simulation study will be reported in Section~\ref{monte}. To illustrate how the resulting estimates compare with the true regression lines on a one-shot experiment, in each scenario we generate $n=5000$ samples $\{(X_i,Y_i)\}_{i=1}^n$. For the analysis we use observations for which $X_i+Y_i > u$, where $u$ is the $95\%$ quantile of the pseudo-radious, providing $k=250$ exceedances to fit the model. To learn about regression lines from data, we exploit the single component adaptive Markov Chain Monte Carlo (MCMC) with a wide Dirichlet prior, Dirichlet$(0.1 \times \ind_k)$, defined on a generalized logit transformation of weights $\pi_{\alphab}$. {The length of each MCMC chain is $10000$ 
with a burn-in period of $4000$. The multivariate effective sample sizes are $903$, $1121$, $1808$  for Scenarios 1,2,3 respectively.}

\par
In Figure~\ref{RLS} we plot true and estimated regression manifolds under the three scenarios above over the range $(x ,y) \in (0,20]\times(0,20]$, where $20$ corresponds to the $95\%$ quantile of the unit Fr\'echet marginal distributions.
Figure~\ref{RLS} shows that, for these one shot experiments, the proposed estimator recovers well the shape of $\mathscr{L}$ for all three cases, although as expected for $q$ closer to 0 and 1 there is some bias. Figure~\ref{RLS} also anticipates a feature that we will revisit in Section~\ref{monte}, i.e. that the case of weakly dependent extremes is more challenging---which is a consequence of the fact that it is more to challenging to learn about U-shaped angular densities from data. To have a closer look into the outputs from these  numerical experiments, we depict in Figure~\ref{CS} cross sections of the angular manifold, over $q$ and over $x$, thus leading to regression lines and conditional quantiles for the Husler--Reiss (top), Logistic (central), and Coles--Tawn (bottom) models. Once more, we see that the fits are fairly reasonable overall although a bit more of bias is visible for $q$ closer to 0 and 1. Interestingly, it can also be seen from Figure~\ref{CS} that regression lines are approximtely linear for the Logistic model, and we prove that this indeed the case for large $x$; see Appendix~D.


\renewcommand{\thefigure}{4.2}
\begin{figure}
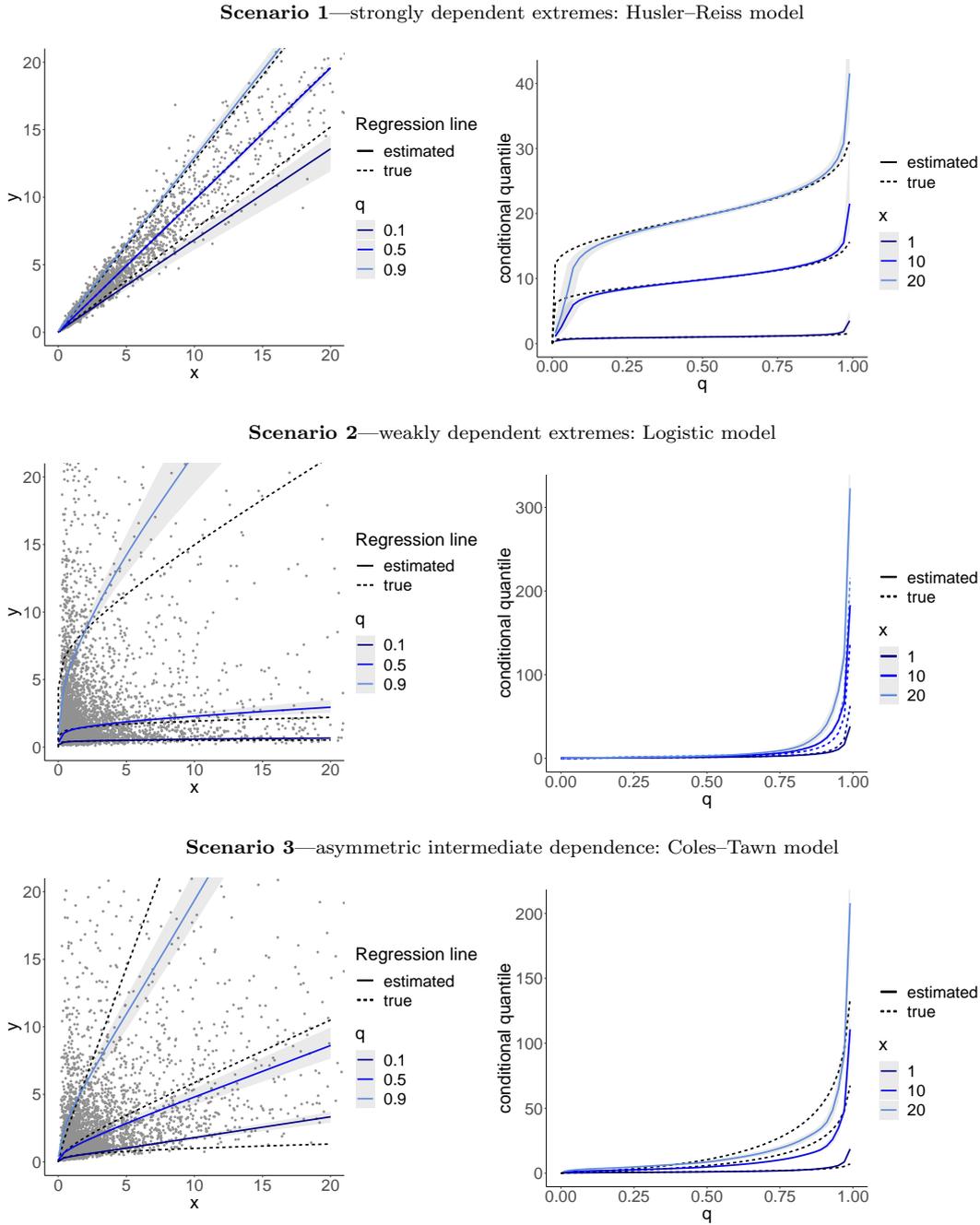

    \begin{center}
  \begin{footnotesize}
    \textbf{Scenario 1}---strongly dependent extremes: Husler--Reiss model
  \end{footnotesize}
  \end{center}
  \vspace{0cm}
    \begin{minipage}[c]{7cm}
    \includegraphics[width=1\textwidth]{hr_strong_dep_cs_p.pdf}
    \text{\hspace{0cm}}
  \end{minipage} 
  \begin{minipage}[c]{7cm}
    \includegraphics[width=1\textwidth]{hr_strong_dep_cs_x.pdf}
  \end{minipage}
  \vspace{0cm}
    \begin{center}
  \begin{footnotesize}
    \textbf{Scenario 2}---weakly dependent extremes: Logistic model
  \end{footnotesize}
  \end{center}
  \vspace{0cm}
    \begin{minipage}[c]{7cm}
    \includegraphics[width=1\textwidth]{log_weak_dep_cs_p.pdf}
    \text{\hspace{0cm}}
  \end{minipage} 
  \begin{minipage}[c]{7cm}
    \includegraphics[width=1\textwidth]{log_weak_dep_cs_x.pdf}
  \end{minipage}
  \vspace{0cm}
   \begin{center}
  \begin{footnotesize}
    \textbf{Scenario 3}---asymmetric intermediate dependence: Coles--Tawn model
  \end{footnotesize}
  \end{center}
      \vspace{0cm}
    \begin{minipage}[c]{7cm}
    \includegraphics[width=1\textwidth]{ct_inter_dep_cs_p.pdf}
    \text{\hspace{0cm}}
  \end{minipage} 
  \begin{minipage}[c]{7cm}
    \includegraphics[width=1\textwidth]{ct_inter_dep_cs_x.pdf}
  \end{minipage}
\caption{Posterior mean regression lines $L_q$ for $q=\{0.1,0.5,0.9\}$ and $x \in (0,20]$ (left) and conditional quantile curves $\{y_{q \mid x}:q\in(0,1)\}$ along with credible bands, for $x = \{1,10,20\}$ (right) for Husler--Reiss, Logistic, and Coles--Tawn bivariate extreme value models (top to bottom) on a single-run experiment.
} 
\label{CS}
\end{figure}

\renewcommand{\thefigure}{4.3}
\begin{figure}
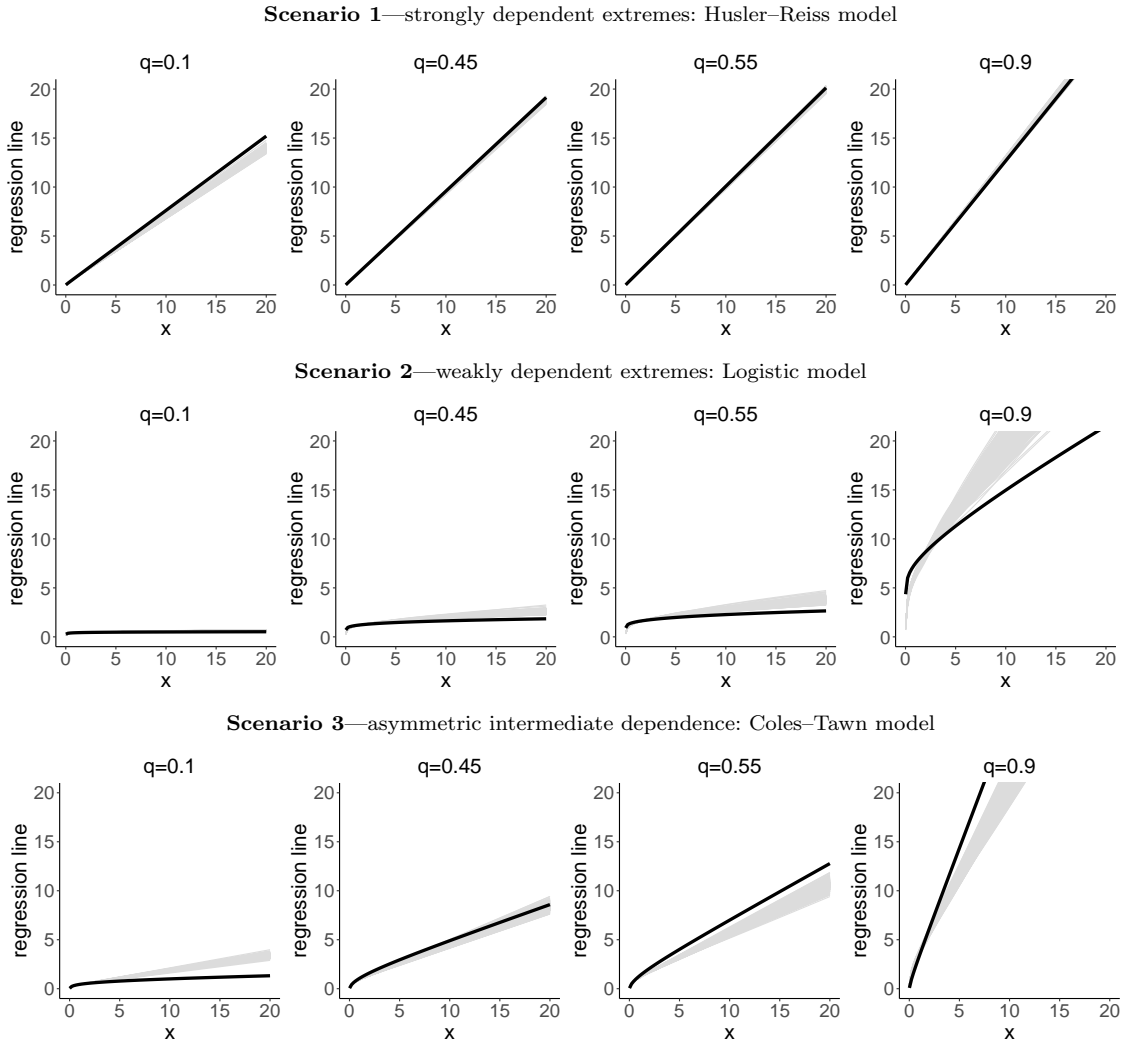

  \centering
    \begin{center}
  \begin{footnotesize}
    \textbf{Scenario 1}---strongly dependent extremes: Husler--Reiss model
  \end{footnotesize}
  \end{center}
  \vspace{0cm}
  \hspace{-0.5cm}
    \includegraphics[width=1\textwidth]{hr_trajectories_500.pdf}
  \vspace{-0.1cm}
    \begin{center}
  \begin{footnotesize}
    \textbf{Scenario 2}---weakly dependent extremes: Logistic model 
  \end{footnotesize}
  \end{center}
  \vspace{0cm}
  \hspace{-0.5cm}
    \includegraphics[width=1\textwidth]{log_trajectories_500.pdf}
  \vspace{-0.1cm}
   \begin{center}
  \begin{footnotesize}
    \textbf{Scenario 3}---asymmetric intermediate dependence: Coles--Tawn model
  \end{footnotesize}
  \end{center}
    \vspace{0cm}
  \hspace{-0.5cm}
    \includegraphics[width=1\textwidth]{ct_trajectories_500.pdf}
\caption{Posterior mean regression lines $L_q$ for $q=\{0.1,0.45,0.55,0.9\}$ and $x \in (0,20]$ for each of the $500$ Monte Carlo samples ($k = 500$, gray lines) plotted against the true conditional quantiles (black line) for Husler--Reiss, Logistic, and Coles--Tawn bivariate extreme value models (top to bottom).}
\label{mc500}
\end{figure}

\subsection{Monte Carlo simulations}\label{monte}
To conduct a simulation study we generate $500$ Monte Carlo samples of sizes $n=5000$ and $n=10000$ resulting in $k=250$ and $k=500$ for the three scenarios described in Section~\ref{prelim}. We use the MCMC algorithm as described in Section \ref{one-shot} with the same prior on $\pi_{\alphab}$'s. The performance of our methods will be visualized via a comparison of posterior mean estimates of the regression lines with the true regression lines  $L_q$ for a few fixed $q \in (0,1)$. We focus on the region $x \in (0,20]$ as the bivariate extreme value  concentrates most of its mass (at least $90$\%) in the set $(0,20]\times(0,20]$. \par
The regression lines corresponding to the described scenarios for $k=500$ are shown in Figure~\ref{mc500}; a similar chart for $k = 250$ is available from the supplementary material (Figure~\ref{mc250}). Figure~\ref{mc500} outlines that the model fits the data from Scenario~1 reasonably well and, for weakly dependent extremes (Scenario~2) and asymmetrically dependent extremes (Scenario 3), it provides relatively precise estimates for middle values of $q$, but as expected it presents some bias for $q$ close to $0$ and $1$. Comparing different sample sizes (i.e.~comparing Figure~\ref{mc500} and Figure~\ref{mc250} in the supplementary material) we can observe that increasing sample size reduces the variation of estimates for all $q \in \{0.1,0.45,0.55,0.9\}$ and for all scenarios. \par 
{Overall, the difference in the performance for the considered scenarios is mainly related to the degree of association between extremes. For moderate-strongly dependent extremes with bell-shaped angular densities, as in Scenarios 1 and 3, we observe a reasonably good fit, whereas for weakly dependent extremes with $U$-shaped angular densities, as in Scenario 2, the estimates tend to be less accurate---as it is more challenging learning about the latter from data.}

\section{Application to stock markets}\label{application}
\subsection{Data, preprocessing, and applied rational for the analysis}
We now apply the proposed method to two of the world's biggest stock markets---the NASDAQ (National Association of Securities Dealers Automated Quotations) and NYSE (New York Stock Exchange). According to the Statistics Portal of the World Federation of Exchanges (\url{https://statistics.world-exchanges.org}), the total equity market capitalization of NASDAQ and NYSE are respectively 20.99 and 24.67 trillion US\$, as of 2021 / Apr, thus illustrating well the scale of these players in the worldwide stock-exchange industry. The data were gathered from Yahoo Finance (\url{https://finance.yahoo.com}), and consist of daily closing prices of the NASDAQ and NYSE composite indices over the period from February 5, 1971 to June 9, 2021.

A key goal of the analysis will be to learn about 
spillover between extreme losses in these markets 
through the lenses of our model, and thus we focus on modeling negative log returns, which can be regarded as a proxy for losses, and which consist of first differences of prices on a log-scale; the resulting sequence of $m$ componentwise weekly maxima losses  for NASDAQ and NYSE is denoted below as $\{(\mathcal{X}_i, \mathcal{Y}_i)\}_{i = 1}^m$. The sample period under analysis is sufficiently broad to cover a variety of major downturns and selloffs including, for example, those related with the 2007--2010 subprime mortgage crisis, the ongoing China--US trade war, and with the 2020 COVID-19 pandemic. We take weekly maxima of negative log returns 
and convert them to unit Fr\'echet margins via the transformation $(\widehat{X}_i, \widehat{Y}_i) = (- 1 / \log\{\widehat{F}_\mathcal{X}(\mathcal{X}_i)\}, - 1 / \log\{\widehat{F}_\mathcal{Y}(\mathcal{Y}_i)\}),$ where $\widehat{F}_\mathcal{X}$ and $\widehat{F}_\mathcal{Y}$ respectively denote the empirical distribution functions (normalized by $m + 1$ rather than by $m$ to avoid division by zero) of negative log returns for NASDAQ ($\mathcal{X}$) and NYSE ($\mathcal{Y}$); the supplementary material include the reverse analysis that swaps the roles of NASDAQ and NYSE (i.e.~NASDAQ becomes $\mathcal{Y}$ and NYSE becomes $\mathcal{X}$). The raw data and resulting preprocessed data are depicted in Figure~\ref{data_ang}. As can be seen from the latter figure the composite indices exhibit a similar dynamics reacting to different economic shocks (9/11 attacks, 2001; 2008 financial crisis; China-US trade war started in 2018) alike. Also, as can be seen from Figure~\ref{data_ang}, the shape of the scatterplot of the negative log returns brought to unit Fr\'echet margins in log-log scale above the boundary threshold evidences intermediate level of extremal dependence between negative log returns. 


\renewcommand{\thefigure}{5.1}
\begin{figure}[htbp]
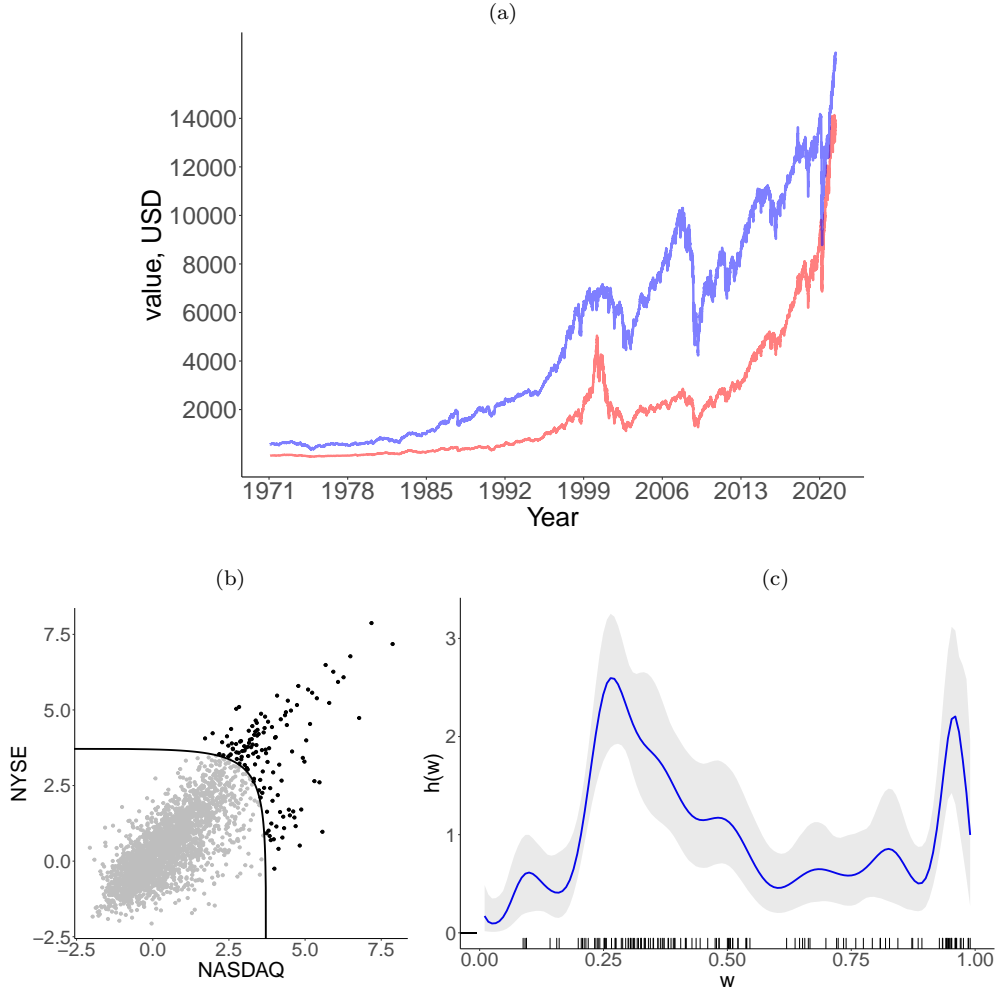

  \centering \footnotesize 
  \begin{minipage}{1.0\linewidth}\centering
  (a)    
  \end{minipage}
  \includegraphics[scale = 0.38]{composite.pdf}  \\ \ \\
  \begin{minipage}{0.48\linewidth}\centering
    (b)    
  \end{minipage}
  \begin{minipage}{0.48\linewidth}\centering
  (c)    
  \end{minipage}
  \includegraphics[scale = 0.3]{scatter_unfiltered.pdf}
  \includegraphics[scale = 0.3]{angular_den_unfiltered.pdf}
  \caption{\label{data_ang} (a) NASDAQ (red) and NYSE (blue) composite indices. (b) Scatterplot of negative log returns of NASDAQ and NYSE composite indices converted to unit Fr\'echet margins; the solid line corresponds to the boundary threshold in the log-log scale, with both axes being logarithmic. (c) Angular density estimate with $95\%$ credible band along with a rug of pseudo-angles.}
\end{figure}

\subsection{Regression of extreme losses on extreme losses}
We now apply our model so to learn about how the extreme losses on both exchanges relate. To employ our model we start by fitting the angular density via the Bernstein polynomial-based approach from Section~\ref{bern} by using the pseudo-angles based on thresholding the pseudo-radius at their 95\% quantile. Some comments on prior specification and on posterior inference are in order. For our calculations we used a single component adaptive MCMC method with a wide Dirichlet prior defined on a generalized logit transformation of weights $\pi_{\alphab}$. We run a MCMC chain of length $25\,000$ with a burn-in period of $10\,000$ and set the number of basis functions to be equal to the number of exceedances. {The specified chain has the multivariate effective sample size of $1\,726\,657$.}

\renewcommand{\thefigure}{5.2}
\begin{figure}
  \centering \hspace{.2cm}
  \begin{minipage}{0.48\linewidth} \hspace{3.5cm} \footnotesize (a)
  \end{minipage}
  \begin{minipage}{0.48\linewidth} \hspace{3.3cm} \footnotesize (b)
    \end{minipage}\\ \vspace{-.5cm}
  \begin{minipage}{0.48\linewidth}\hspace{.3cm}
  \includegraphics[scale=0.43]{RL_unfiltered.pdf}  
\end{minipage} 
\begin{minipage}{0.48\linewidth}\hspace{0.6cm}
  \includegraphics[scale = .34]{randomized_q_exceedances.pdf}  
\end{minipage}
  \begin{minipage}{0.48\linewidth} \hspace{3.5cm} \footnotesize (c)
  \end{minipage}
    \begin{minipage}{0.48\linewidth} \hspace{3.3cm}  \footnotesize (d)
    \end{minipage}\\ 
  \begin{minipage}{0.48\linewidth} \hspace{-.4cm}    
    \includegraphics[scale=0.34]{cs_p_unfiltered.pdf} 
  \end{minipage}    
  \begin{minipage}{0.32\linewidth}\hspace{-.4cm}    
    \includegraphics[scale=0.34]{cs_x_unfiltered.pdf}
  \end{minipage}
  \caption{\label{data_RL} (a) Posterior mean regression manifold $\mathscr{L}$ for NYSE given NASDAQ along with joint negative log returns overlaid on one of the faces of the box. (b) QQ-plot of randomized quantile residuals; the dashed line represents the posterior mean plotted along with credible bands. (c) Posterior mean  regression lines $L_q$ for $q=\{0.1,0.5,0.9\}$ for NYSE given NASDAQ along with $95\%$ credible bands and plotted against joint negative log returns. (d) Posterior mean conditional quantile curves $\{y_{q \mid x}:q\in(0,1)\}$ of negative log returns on NYSE for $x=\{0.01,0.02,0.03\}$, along with $95\%$ credible bands,  corresponding to negative log returns on NASDAQ in the original margins.}
\end{figure}


The obtained fit for the angular density is reported in Figure~\ref{data_ang} (right). As is illustrated by this plot most of the observed pseudo-angles lie closer to the middle of the interval $(0,1)$ and the estimate resembles a bell-shaped right-skewed density which suggests there is an asymmetric intermediate dependence between extremal losses on NASDAQ and NYSE composite indices.
Next we learn about the regression manifold. 
{Figure~\ref{data_RL} (a) represents the resulting estimates of the regression manifold together with cross-sections in $q$ and $x$, respectively in (b) and (d) for negative log-returns on NASDAQ and NYSE composite indices in the original margins. The regression manifold is highly non-linear and the regression lines on the middle graph substantially differ from those corresponding to independence and tend to be closer to the identity line. Moreover, the cross-sections for different values of $x$ reveal considerable variation in quantiles of $y$ supporting the conclusion about presence of the dependence between negative log-returns.} {To assess the quality of the fitted regression manifold we depict in Figure~\ref{data_ang} (b) a QQ-plot of a version of \cite{dunn1996} randomized quantile residuals adapted to our model, defined as $\varepsilon_{i} = \Phi^{-1}(G_{H}(Y_i \mid X_i))$, for $Y_i + X_i > u$, with $u$ denoting the 95\% quantile of the pseudo-radius. The latter chart depicts randomized quantile residuals against the theoretical standard Normal quantiles, and it suggests an acceptably good fit of the proposed model.}



{Having evaluated the regression manifold, we are now ready to examine by how much the NYSE can plummet, when the NASDAQ plummets. To examine this, we report in Table~\ref{quant_orig} predicted $75\%$, $90\%$ and $95\%$ quantiles of losses on NYSE evaluated for $1\%$, $2\%$ and $3\%$ weekly maxima losses on NASDAQ. This table follows from the regression manifold, and its interpretation is as follows. First, from a qualitative viewpoint, Table~\ref{quant_orig}  indicates that whenever the NASDAQ plummets, the NYSE tends to plummet reasonably by the same amount. Second---and more interesting from a financial outlook---are the quantitative claims that came be made from the analysis. For example, Table~\ref{quant_orig} indicates that whenever there is a $1\%$ weekly maximum loss on the NASDAQ, only in 5\% of the times we expect to suffer a loss in the NYSE above $1.71\%$. As another example, Table~\ref{quant_orig} indicates that whenever there is a $3\%$ weekly loss on the NASDAQ, only in 5\% of the times we expect to suffer a loss in the NYSE above $3.33\%$.}

\begin{table} \caption{Predicted $75\%$, $90\%$ and $95\%$ quantiles of losses on NYSE evaluated for $1\%$, $2\%$ and $3\%$ weekly maxima losses on NASDAQ, with $95\%$ credible intervals in brackets; negative log-returns used as proxy for losses}            
	\centering    
	\begin{threeparttable}
		\begin{tabular}{c c c c}  
			\hline 			\hline
			\multirow{2}{*}{NYSE} &
			\multicolumn{3}{c}{NASDAQ} \\ [0.5ex]
			\cline{2-4}
			 & $0.01$ & $0.02$ & $0.03$\\ [0.5ex]
			\hline 
			$75\%$ & $0.0129$ & $0.0194$ & $0.0266$ \\
			& \footnotesize $(0.0125,0.0134)$ &  \footnotesize $(0.0187,0.0201)$ & \footnotesize $(0.0259,0.0274)$ \\
			$90\%$ & $0.0156$ & $0.0233$ & $0.0315$  \\
			& \footnotesize $(0.0153,0.0159)$ &  \footnotesize $(0.0229,0.0237)$ & \footnotesize $(0.0309,0.0319)$ \\
			$95\%$ & $0.0171$ & $0.0249$ & $0.0333$
			\\
			& \footnotesize $(0.0166,0.0177)$ &  \footnotesize $(0.0245,0.0255)$ & \footnotesize $(0.0328,0.0339)$ \\
			[1ex]
			\hline 			\hline
		\end{tabular}
		\label{quant_orig}   
	\end{threeparttable}
\end{table}


\section{Closing remarks}\label{discussion}
We propose a regression-type model for the setup where both the response and the covariate are block maxima. The modeling starting point is the result that the limiting behavior of the vector of properly standardized componentwise maxima is given by a multivariate extreme value distribution. Conceptually, the model is then constructed in a similar fashion as in quantile regression, that is, by assessing how the conditional quantile of the response reacts to changes in the covariate while it takes into account the latter asymptotic result.
An important target in the proposed framework is the regression manifold, which consists of a family of regression lines obeying the proviso of multivariate extreme value theory. A Bernstein polynomial prior on the space of angular densities is used to learn about the model from data, with numerical studies showcasing its flexibility. 

One could wonder why not to resort to statistical models for nonstationary extremes \citep[e.g.][Section~6]{coles2001} as an alternative to methods proposed herein, as these can be used for assessing the effect of covariates on an extreme-valued response, by indexing the parameters of the GEV distribution with a covariate. Yet, since the latter models are built from the univariate theory of extremes they are not tailored for conditioning on another variable being extreme, as they fail to take on board information from the dependence structure between the extremes. Other related approaches include extremal quantile regression methods  \citep{chernozhukov2005}---which similarly to the statistical models for nonstationary extremes---have not been designed for conditioning on another variable being extreme, as they do not take into account the dependence structure between the extremes.


While not explored here, the comparison of the fitted models for both $Y \mid X = x$ and $X \mid Y = y$,  would look natural for some applied settings of interest so to get an idea of cause and effects, and indeed related ideas are analyzed by \cite{mhalla2020}. Finally, we close the paper with some comments on future research. For regressions with many predictors, it is likely that most covariates will have little effect on the response and thus one could wonder how to devise a version of the proposed method that shrinks towards zero the effect of such irrelevant covariates; the development of a Lasso \citep{tibshirani1996} version of the proposed model would thus seem natural for such situation, and is left as an open problem for future research. Another natural avenue for future research would be to devise regression-type methods for exceedances on exceedances by resorting to the so-called multivariate generalized Pareto distribution \citep{kiriliouk2019}, rather than with the multivariate extreme value distribution as herein. Finally, the development of a version of the model that could take into account asymptotic independence by resorting to the hidden angular measure \citep{ramos2009}, rather than the standard angular measure as herein, would seem natural as well. 

\section*{Appendix}\section*{\textit{Appendix~A: Conditional bivariate extreme value distribution}}\label{CondDistr}
Here we derive the expression for the conditional bivariate extreme value distribution function in \eqref{condD}. Sklar's theorem \cite[Theorem 2.3.3]{nelsen2006}, implies that a joint bivariate distribution function $G:\mathbb{R}^2 \to [0,1]$ with continuous marginal distributions $G_X:\mathbb{R} \to [0,1]$ and $G_Y:\mathbb{R} \to [0,1]$ can be uniquely represented through a copula function $C$ for $(x,y) \in \mathbb{R}^2$
$
G(x,y) = C(G_X(x),G_Y(y)),
$
or, equivalently,
$
C(u_1,u_2) = G(G^{-1}_X(u_1), G^{-1}_Y(u_2))$, for $(u_1,u_2) \in [0,1]^2$,
where $G^{-1}_X(q) = \inf \left\{ x: G_X(x) \geq q \right\}$. Using the following well-known property of copulas, 
\begin{align*}
C_{U_2\mid U_1}(u_2\mid u_1) \defeq \prob \left( U_2 \leq u_2 \mid U_1=u_1 \right) 
=
\partder{C(u_1,u_2)}{u_1}, \quad (u_1,u_2) \in [0,1]^2,
\end{align*}
we calculate the conditional distribution $(Y \mid X)$ as
\begin{align*}
G_{Y\mid X} (y\mid x) 
=
C_{U_2\mid U_1}(e^{-1/y}\mid e^{-1/x}).
\end{align*}
In our setting
\begin{align*}
	C(u_1,u_2) 
	&= 
	\exp\left[ - 2 \int_0^1 \max \{- w \log u_1,- (1 - w) \log u_2 \} \, \dif H(w) \right].
\end{align*}
Assuming $H$ is absolutely continuous with density $h$, we have
\begin{align*}
    &\partder{}{x} 2\int_0^1 \max \left( \dfrac{w}{x}, \dfrac{1-w}{y} \right) \, \dif H(w)
    =
    2\partder{}{x} \left( x^{-1}\int_{\omega(x, y)}^1 w \, \dif H(w) + y^{-1}\int_0^{\omega(x, y)} (1-w) \, \dif H(w) \right) 
     \\
     &\quad 
     =
     2 \left[ -x^{-2} \int_{\omega(x, y)}^1 w \, \dif H(w) -
     (x^{-1}xy - y^{-1}y^2)
     \dfrac{1}{(x+y)^3} h\{\omega(x, y)\}
     \right]
    =
     -2 x^{-2} \int_{\omega(x, y)}^1 w h(w) \, \dif w, 
\end{align*}
where $\omega(x, y) = x / (x + y)$. Then, the conditional copula has the following form
\begin{align*}
	C_{U_2\mid U_1}(u_2\mid u_1) 
	&=
	2 u^{-1}_1 C(u_1,u_2) \int\limits_{\dfrac{\log u_2}{\log u_1u_2}}^1 w h(w)  \, \dif w,
\end{align*}
which in turn yields
\begin{align*} 
    G_{Y|X}(y \mid x)
    =
    2
    \exp \left\{ - 2 \int_0^1 \max \left( \dfrac{w}{x}, \dfrac{1-w}{y}  \right) h(w)  \, \dif w + x^{-1} \right\} \int_{w(x, y)}^1 w h(w)  \, \dif w,
    \quad x,y>0.
\end{align*}


\section*{\textit{Appendix~B: Soft-maximum approximation for regression manifold of perfectly dependent extremes}}
\label{perfdep}
Here we give details on the soft maximum approximation for the regression lines for perfectly dependent extremes claimed in \eqref{softy}. We use a smooth approximation of a maximum function called \textit{soft-maximum}, 
\begin{align*}
    f(z_1, \dots, z_d;N) 
    =
    \dfrac{1}{N}\log(e^{N z_1} + \dots + e^{N z_d}),
\end{align*}
which is infinitely differentiable everywhere and converges to the maximum function as $N \to \infty$ \citep{cook2011}. Then, the approximation of a multivariate GEV distribution function for the case of perfect dependent extremes, $G(y, \mathbf{x}) = \max \{ y^{-1}, x_1^{-1}, \dots, x_p^{-1} \}$ is
\begin{align*}
    \Tilde{G}(y, \mathbf{x};N)
    = 
    \exp\left\{ -
    \dfrac{1}{N}\log( e^{N y^{-1}} + e^{N x_1^{-1}} + \dots + e^{N x_p^{-1}}) 
    \right\}
    =
    ( e^{N y^{-1}} + e^{N x_1^{-1}} + \dots + e^{N x_p^{-1}})^{-1/N},
\end{align*}
and its partial derivative of order $d$ is
\begin{align*}
    \Tilde{g}(y, \mathbf{x};N)
    &= 
    \dfrac{\partial^d}{\partial y \partial x_1 \cdots \partial x_p} \Tilde{G}(y, \mathbf{x};N)\\
    &=
    y^{-2} \prod\limits_{i=1}^p (1 + iN) x_i^{-2}
    \exp\left( Ny^{-1} + \sum\limits_{i=1}^p x_i^{-1} \right)
    ( e^{N y^{-1}} + e^{N x_1^{-1}} + \dots + e^{N x_p^{-1}} )^{-1/N - d}.
\end{align*}
This yields the following approximation of the conditional multivariate GEV density for perfectly dependent extremes, 
\begin{align*}
    \Tilde{g}_{Y\mid \mathbf{X}} (y \mid \mathbf{x}; N)
    &=
    \dfrac{y^{-2} \prod\limits_{i=1}^p (1 + iN) x_i^{-2}
    \exp\left( Ny^{-1} + \sum\limits_{i=1}^p x_i^{-1} \right)
    ( e^{N y^{-1}} + e^{N x_1^{-1}} + \dots + e^{N x_p^{-1}})^{-1/N - d}}{\prod\limits_{i=1}^{p-1} (1 + iN) \prod\limits_{i=1}^{p} x_i^{-2}
    \exp\left( \sum\limits_{i=1}^p x_i^{-1} \right)
    ( e^{N x_1^{-1}} + \dots + e^{N x_p^{-1}})^{-1/N - p}}\\
    &=
    (1+pN) y^{-2} e^{Ny^{-1}}
     \frac{( e^{N y^{-1}} + e^{N x_1^{-1}} + \dots + e^{N x_p^{-1}})^{-1/N - d} }{( e^{N x_1^{-1}} \dots + e^{N x_p^{-1}})^{-1/N - p}},
\end{align*}
and the following approximation for the corresponding conditional cumulative distribution function
\begin{align*}
    \Tilde{G}_{Y\mid \mathbf{X}} (y \mid \mathbf{x} ; N)
    &=
    \int_0^y \Tilde{g}_{Y\mid \mathbf{x}} (z \mid \mathbf{x})  \, \dif z\\
    &=
      (1+pN) ( e^{N x_1^{-1}} + \dots + e^{N x_p^{-1}} )^{1/N + p} \\
  &\hspace{.5cm}\int_0^y z^{-2} e^{Nz^{-1}}
    ( e^{N z^{-1}} + e^{N x_1^{-1}} + \dots + e^{N x_p^{-1}})^{-1/N - d}  \, \dif z\\
    &=
    \dfrac{(1+pN)(-1/N)}{-1/N - d + 1} ( e^{N x_1^{-1}} + \dots + e^{N x_p^{-1}} )^{1/N + p} \\
      &\hspace{.5cm}( e^{N y^{-1}} + e^{N x_1^{-1}} + \dots + e^{N x_p^{-1}} )^{-1/N - d + 1}\\
    &=
    \left( \dfrac{ e^{N y^{-1}} + e^{N x_1^{-1}} + \dots + e^{N x_p^{-1}} }{e^{N x_1^{-1}} + \dots + e^{N x_p^{-1}}} \right)^{-1/N - p}.
\end{align*}
Passing the last expression to the limit as $N \to \infty$ provides an ansatz for the true conditional distribution function 
\begin{align*}
    \Tilde{G}_{Y\mid \mathbf{X}}(y\mid \mathbf{x}) 
    = 
    \begin{cases}
		1, & y \geq \min(x_1,\dots,x_p)\\
		0, & y < \min(x_1,\dots,x_p)
	\end{cases}
\end{align*}
with $y, x_1,\dots,x_p > 0$, from where \eqref{softy} follows. 

\section*{\textit{Appendix~C: Proof of Proposition \ref{prop7}}}
\label{proof7}
Since $y\mapsto G_{Y \mid X} (y \mid x)$ is continuous (strictly increasing) for all $x \in (0,\infty)$, $y_{q\mid x}$  given by \eqref{genInv} is the solution to
$G_{Y\mid X}(y\mid x) = q$ for a fixed $q \in (0,1)$. Then $y$ satisfying $G_{Y\mid X}(y\mid x) = q$ is an implicit function of $x$ parametrized by $q$. Under our assumptions we apply the implicit function theorem and calculate the derivative of $y_{q\mid x}$ with respect to $x$ via
\begin{align}
\label{IFT}
    \partder{}{x}y_{q\mid x} 
    = 
    - \dfrac{\partder{}{x}G_{Y\mid X}(y\mid x)}{\partder{}{y}G_{Y\mid X}(y\mid x)}.
\end{align}
Equation~\eqref{IFT} combined with the monotone regression dependence property, i.e.~$x\mapsto G_{Y\mid X}(y\mid x)$ is non-increasing for all $y \in (0,\infty)$ \citep[][Theorem~1]{guillem2000}, and the strict monotonicity of $y\mapsto G_{Y\mid X}(y\mid x)$  (increasing) for all $x \in (0,\infty)$
gives
$$
\partder{}{x}y_{q\mid x} \geq 0.
$$
This completes the proof.
\section*{\textit{Appendix~D~Exact and limiting regression manifolds for logistic model}}
\label{linear_approx}
Here we give details on how the exact \eqref{RLLogistic} and approximated \eqref{RLLogisticA} regression manifolds for the logistic model can be derived. The derivations below require the use of Lampert $W$ function \citep{BorweinLindstrom2016LambertFunc} on which some properties and details can be found in the supplementary material. \\ \ \\
\noindent \textbf{\textit{D.1.~Exact regression manifold}}.~
Here we compute the conditional quantiles for bivariate extreme value distribution and their linear approximation for large $x$. Using (\ref{condD}), we calculate the conditional distribution function for the logistic model; for $(x,y)\in(0,\infty)^2$, it follows that 
\begin{align*}
    G_{Y|X}(y \mid x) 
    &=
    G(x,y) (x^{-1/\alpha} + y^{-1/\alpha})^{\alpha-1} x^{-1/\alpha - 1} x^2 \exp(x^{-1})\\
    &= 
    \exp \{ - ( x^{-1/\alpha} + y^{-1/\alpha})^{\alpha} + x^{-1} \} (x^{-1/\alpha} + y^{-1/\alpha})^{\alpha-1} x^{1-1/\alpha}.
\end{align*}
The conditional quantiles behave differently depending on the strength of dependence between extremes. The special case of the logistic model is for $\alpha=1$,  corresponding to independence between extremes for which the family of regression lines are known to be given by \eqref{regind}. We now derive conditional quantiles for bivariate dependent extremes, i.e. when $\alpha \in [0,1)$. Since $y \mapsto G_{Y|X}$ is continuous, a conditional quantile is a solution to 
\begin{align}\label{LogCond}
\exp \{ - ( x^{-1/\alpha} + y^{-1/\alpha})^{\alpha} \} ( x^{-1/\alpha} + y^{-1/\alpha})^{\alpha - 1} x^{1-1/\alpha}\exp(x^{-1})
=
q,
\end{align}
which can be written in terms of the Lambert $W$ function. Rewriting \eqref{LogCond} as
\begin{align*}
& \exp \left\{ - \dfrac{\alpha}{\alpha-1} ( x^{-1/\alpha} + y^{-1/\alpha})^{\alpha} \right\} ( x^{-1/\alpha} + y^{-1/\alpha})^{(\alpha - 1){\alpha}/{(\alpha-1)}} x^{{(\alpha - 1)}/{\alpha} {\alpha}/{(\alpha-1)}} \\ & \hspace{.5cm} \times \exp\left\{ \dfrac{\alpha}{\alpha-1} x^{-1} \right\}  
=
q^{{\alpha}/{(\alpha-1)}}\\
\Leftrightarrow & \exp \left\{ \dfrac{\alpha}{1-\alpha} ( x^{-1/\alpha} + y^{-1/\alpha}  )^{\alpha} \right\} \dfrac{\alpha}{1-\alpha} ( x^{-1/\alpha} + y^{-1/\alpha})^{\alpha} 
=
\dfrac{\alpha}{1-\alpha} x^{-1} \exp\left\{ \dfrac{\alpha}{1-\alpha} x^{-1} \right\} q^{{\alpha}/(\alpha-1)}\\
\Leftrightarrow & \dfrac{\alpha}{1-\alpha} ( x^{-1/\alpha} + y^{-1/\alpha})^{\alpha} 
=
W \left( \dfrac{\alpha}{1-\alpha} x^{-1} e^{\alpha/(1-\alpha) x^{-1}} q^{{\alpha}/(\alpha-1)} \right),
\end{align*}
gives
\begin{align}
\label{sol}
	y_{q\mid x} 
	= 
	\left[ \left\{ \dfrac{1-\alpha}{\alpha} x W \left( \dfrac{\alpha}{1-\alpha} x^{-1} e^{ \alpha/(1-\alpha) x^{-1}} q^{{\alpha}/(\alpha-1)} \right) \right\}^{1/\alpha} - 1 \right]^{-\alpha} x , \quad x > 0.
\end{align}

\noindent From properties of the Lambert $W$ function (see supplementary material for details) it follows that
$$
	\underset{x \rightarrow \infty}{\lim} x \, W \left(\dfrac{\alpha}{1-\alpha} \, x^{-1} \, e^{ {\alpha}/(1-\alpha) x^{-1}} q^{{\alpha}/{(\alpha-1)}} \right) = \dfrac{\alpha}{1-\alpha}q^{{\alpha}/{(\alpha-1)}},
$$
and that the conditional quantiles tend to infinity as $x \rightarrow \infty$. \\ \ \\
\noindent \textbf{\textit{D.2.~Limiting regression manifold}}.~
Below we show that \eqref{RLLogisticA} holds. To find $\gamma_q$ and $\beta_q$ we use the expansion of the principal branch of the Lambert $W$ function, a solution to $z = w e^w$ when $z>0$, around $0$, that is  
\begin{align*}
    W(z) 
    =
    \sum\limits_{n=1}^{\infty} \dfrac{(-n)^{n-1}}{n!} z^n = z + O(z^2),
\end{align*}
where $O(z)$ is big-$O$ of $z$. Substituting in \eqref{sol} the expansion of $W$ leads to 
\begin{align*}
	y_{q\mid x} 
	&=
	\bigg[ \bigg\{ \dfrac{1-\alpha}{\alpha}x \bigg( \dfrac{\alpha}{1-\alpha} x^{-1} e^{ \frac{\alpha}{1-\alpha} x^{-1}} q^{\dfrac{\alpha}{\alpha-1}} + O(x^{-2} e^{{2 \alpha}/{(1 - \alpha)} x^{-1}}) \bigg) \bigg\}^{1/\alpha} - 1 \bigg]^{-\alpha} x\\
	&=
	\bigg[ \bigg\{ e^{\alpha/(1-\alpha) x^{-1}} q^{\dfrac{\alpha}{\alpha-1}} + O(x^{-1} e^{{2 \alpha}/{(1 - \alpha)} x^{-1}})  \bigg\}^{1/\alpha} - 1 \bigg]^{-\alpha} x,
\end{align*}
and taking $x \to \infty$ we find the linear asymptote of $x \mapsto y_{q\mid x}$ which is based on 
\begin{align}\label{betaq}
	\beta_q
	&=
	\lim\limits_{x \to \infty} \dfrac{y_{q \mid x}}{x} 
	=
	\{  q^{-1/(1-\alpha)}  - 1 \}^{-\alpha},
\end{align}
and, the more involved calculation of $\gamma_q$. The derivative of the asymptotic expansion of a function does not necessarily correspond to the asymptotic expansion of the derivative of the function; hence, we use L'Hospital with \eqref{sol} and only then inject the asymptotic expansions. We have then after some tedious derivations that  
\begin{align}\label{gammaq}
	\gamma_q
	&=
	\lim\limits_{x \to \infty} \left( y_{q \mid x} - \beta_q x \right)
	=
    \dfrac{\alpha}{1-\alpha} \{ q^{{1}/{(\alpha-1)}} - 1 \}^{-\alpha-1} 
    \{ q^{{\alpha}/{(1-\alpha)}}
     -  1
     \}  q^{1 / (\alpha-1)}.
\end{align}
The resulting linear approximation is as follows, for $q \in (0,1)$ and $x \gg 1$:
\begin{align*}
    \tilde{y}_{q \mid x}
    &=
    \dfrac{\alpha}{1-\alpha} \{ q^{{1}/{(\alpha-1)}} - 1 \}^{-\alpha-1} 
    \{ q^{{\alpha}/{(1-\alpha)}}
     -  1
     \}  q^{{1}/{(\alpha-1)}}
     +
     \{  q^{-{1}/{(1-\alpha)}}  - 1 \}^{-\alpha}x \\
     &=\alpha_q + \beta_q x.
\end{align*}

\begin{acknowledgements}
We thank, without implicating, Johan Segers (Universit\'e catholique de Louvain) and Rapha\"el Huser (King Abdullah University of Science and Technology) for insightful discussions, suggestions, and comments. M.~de Carvalho acknowledges support from the \emph{Funda{\c c}$\tilde{\text{a}}$o para a Ci$\hat{e}$ncia e a Tecnologia} (Portuguese NSF) through the projects PTDC/MAT-STA/28649/2017 and UID/MAT/00006/2020.
G.~dos Reis acknowledges support from the \emph{Funda{\c c}$\tilde{\text{a}}$o para a Ci$\hat{e}$ncia e a Tecnologia} (Portuguese NSF) through the project UIDB/00297/2020 (Centro de Matem\'atica e Aplica\c c$\tilde{\text{o}}$es CMA/FCT/UNL). A.~Kumukova was supported by The Maxwell Institute Graduate School in Analysis and its Applications, a Centre for Doctoral Training funded by the UK Engineering and Physical Sciences Research Council (grant EP/L016508/01), the Scottish Funding Council, Heriot-Watt University, and the University of Edinburgh. 
\end{acknowledgements}

{\footnotesize 
\noindent \textbf{Data Availability Statement}~The datasets analysed during the current study are available from Yahoo Finance (\url{https://finance.yahoo.com}).}

 \newpage

\section*{Supplementary material}
\section*{\textit{SM~A: Additional numerical evidence}}
\subsection*{\textit{SM~A.1.Induced prior for $p$-covariate setting}}
We report on two one-shot numerical experiments aimed at illustrating the approach in Section~\ref{higher_dim} in the paper, that induces a prior on the space of all regression manifolds by resorting to Bernstein polynomials and an approximation of a multivariate GEV density due to \cite{cooley2012}. For the numerical experiments in this supplementary material, we test our model by taking a trivariate logistic extreme value distribution with dependence parameter $\alpha=0.1$ (`strongly' dependent extremes) for the case $p=2$, i.e. with the trivariate GEV distribution
\begin{equation*}
G(y, x_1, x_2) 
= 
\exp \{ - ( y^{-1/\alpha} + x_1^{-1/\alpha} + x_2^{-1/\alpha} )^{\alpha} \},
\quad
y, x_1, x_2>0.
\end{equation*}
We generate two samples of sizes $n=10000$ and $n=20000$ which, after thresholding at $95\%$ empirical quantiles of the pseudo-radius, yield $k=500$ and $k=1000$ data points to fit the model. Here, we use a similar prior specification and MCMC setup as in Section~\ref{prelim} of the paper.\par
Figure~\ref{fig:contoursplots} indicates that the proposed estimator of the angular density captures reasonably well the dependence between extremes by concentrating around the barycenter of the simplex, though in a less pronounced form than the true density. As can be seen from Figure~\ref{fig:regressionsurface}, the resulting fitted regression lines resemble the true ones, $L_q$, and increasing sample size improves the fit as the lateral surfaces of the estimates become more slanting, for $q=\{0.3,0.5,0.7\}$.

\renewcommand{\thefigure}{SM.1}
\begin{figure}[htbp]
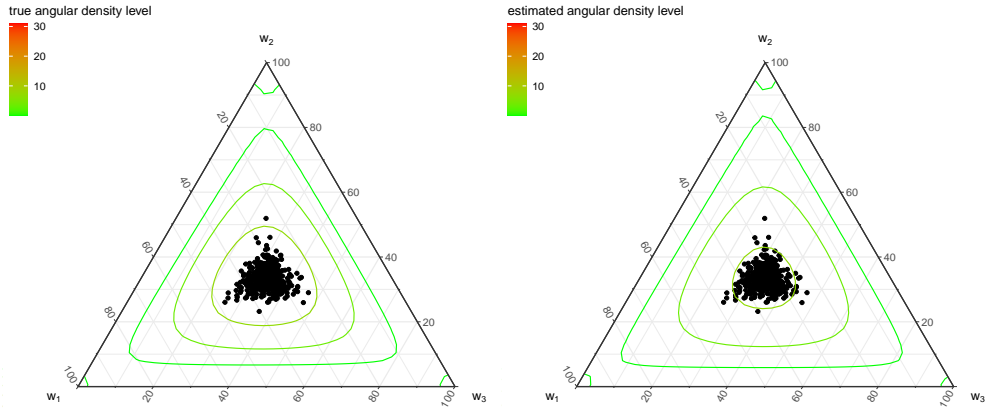

	\centering
	\includegraphics[scale = 0.4]{contour_true.pdf}
	\hspace{-0.75cm}
	\includegraphics[scale = 0.4]{contour_estimate.pdf}
	\caption{\label{ang_dens} Level plots of the true angular density (left) along with the posterior mean estimate resulting from the methods from Section~3.2 (right) on $n=10000$ observations for the trivariate logistic extreme value distribution, on a single-run experiment, with dependence parameter $\alpha=0.1$.}
	\label{fig:contoursplots}
\end{figure}

\renewcommand{\thefigure}{SM.2}
\begin{figure}[htbp]
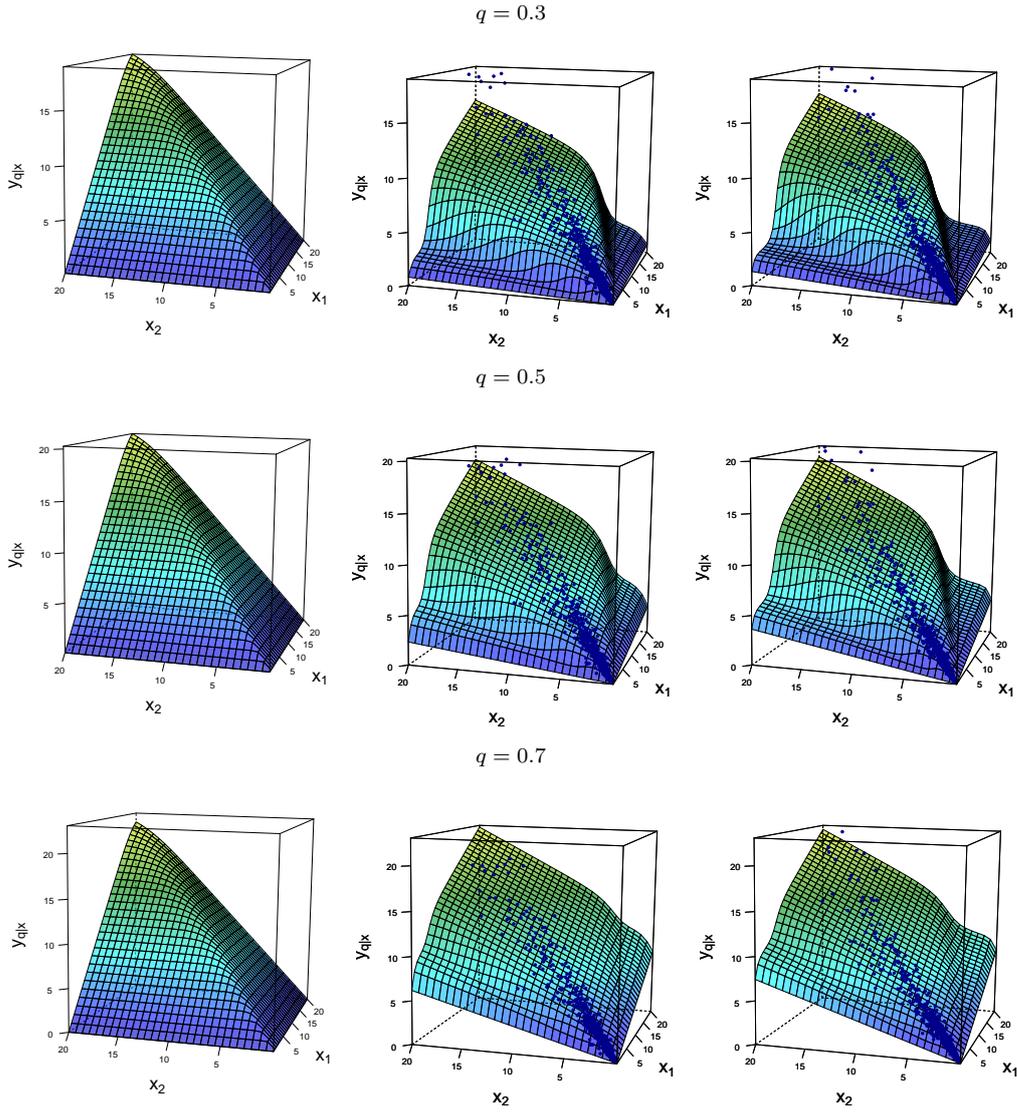

	\begin{center}
		\begin{center}
			\begin{footnotesize}
				\textbf{$q=0.3$}
			\end{footnotesize}
		\end{center}
		\vspace{-0.5cm}
		\begin{minipage}[c]{.3\textwidth}
			\includegraphics[scale = 0.28]{rs_true_03.pdf}
			\text{\hspace{0cm}}
		\end{minipage} 
		\begin{minipage}[c]{.3\textwidth}
			\includegraphics[scale = 0.28]{rs_est_500_03.pdf}
		\end{minipage}
		\begin{minipage}[c]{.3\textwidth}
			\includegraphics[scale = 0.28]{rs_est_1000_03.pdf}
		\end{minipage}
		\vspace{-0.5cm}
		\begin{center}
			\begin{footnotesize}
				$q=0.5$
			\end{footnotesize}
		\end{center}
		\vspace{-0.5cm}
		\begin{minipage}[c]{.3\textwidth}
			\includegraphics[scale = 0.28]{rs_true_05.pdf}
			\text{\hspace{0cm}}
		\end{minipage} 
		\begin{minipage}[c]{.3\textwidth}
			\includegraphics[scale = 0.28]{rs_est_500_05.pdf}
		\end{minipage}
		\begin{minipage}[c]{.3\textwidth}
			\includegraphics[scale = 0.28]{rs_est_1000_05.pdf}
		\end{minipage}
		\vspace{-0.5cm}
		\begin{center}
			\begin{footnotesize}
				$q=0.7$
			\end{footnotesize}
		\end{center}
		\vspace{-0.5cm}
		\begin{minipage}[c]{.3\textwidth}
			\includegraphics[scale = 0.28]{rs_true_07.pdf}
			\text{\hspace{0cm}}
		\end{minipage} 
		\begin{minipage}[c]{.3\textwidth}
			\includegraphics[scale = 0.28]{rs_est_500_07.pdf}
		\end{minipage}
		\begin{minipage}[c]{.3\textwidth}
			\includegraphics[scale = 0.28]{rs_est_1000_07.pdf}
		\end{minipage}
	\end{center}
	\caption{The true $L_q$ (left) for $q=\{0.3, 0.5,0.7\}$ (top to bottom) along with the posterior mean estimate resulting from the methods from Section~3.3 on $n=10000$ (middle) and $n=20000$ (right) observations for the trivariate logistic extreme value distribution, on a single-run experiment, with the dependence parameter $\alpha=0.1$ over the domain $\mathbf{x}=(x_1,x_2)\in(0,20]^2$.}
	\label{fig:regressionsurface}
\end{figure}

\subsection*{\textit{SM~A.2.Induced prior for $p$-covariate setting}}

\noindent Figure~\ref{mc250} below complements Figure~\ref{mc500} in the paper; the number of exceedances of the figure reported here is $k=250$ while that in the main paper is $k=500$. 

\renewcommand{\thefigure}{SM.3}
\begin{figure}[htbp]
	\centering
	\vspace{0cm}
	\begin{center}
		\begin{footnotesize}
			\textbf{Scenario 1}--strongly dependent extremes: Husler--Reiss model
		\end{footnotesize}
	\end{center}
	\vspace{0cm}
	\hspace{0cm}
	\includegraphics[width=1\textwidth]{hr_trajectories_250.pdf}
	\vspace{-0.5cm}
	\begin{center}
		\begin{footnotesize}
			\textbf{Scenario 2}--weakly dependent extremes: Logistic model
		\end{footnotesize}
	\end{center}
	\vspace{0cm}
	\hspace{0cm}
	\includegraphics[width=1\textwidth]{log_trajectories_250.pdf}
	\vspace{-0.5cm}
	\begin{center}
		\begin{footnotesize}
			\textbf{Scenario 3}--asymmetric intermediate dependence: Coles--Tawn model
		\end{footnotesize}
	\end{center}
	\vspace{0cm}
	\hspace{0cm}
	\includegraphics[width=1\textwidth]{ct_trajectories_250.pdf}
	\caption{Posterior mean regression lines $L_q$ for $q=\{0.1,0.45,0.55,0.9\}$ and $x \in (0,20]$ for each of the $500$ Monte Carlo samples ($k = 500$, gray lines) plotted against the true conditional quantiles (black line) for Husler--Reiss, Logistic, and Coles--Tawn bivariate extreme value models (top to bottom).}
	\label{mc250}
\end{figure}

\section*{\textit{SM~B: Details on the Lambert $W$ function}}
The Lambert $W$ function is used in the paper for deriving the regression manifold for the logistic model (cf~Example~\ref{logistic} and Appendix~D), and thus we offer here some details on it. Formally, the Lambert $W$ function is a set of functions representing the inverse relation of the function $f(z) = z e^z$ for any complex $z$. Since we deal only with positive real valued $z$, the equation $f(z) = z e^z$ has only one solution $w = W(z)$, with $W$ being the principal branch of the Lambert $W$ function. A useful property of this function is that for any constant $a\in \mathbb{R}$ one has 
$$
\underset{z \rightarrow \infty}{\lim}  z W(a/z)  
=
\underset{z \rightarrow \infty}{\lim}   a e^{-W(a/z)}  
= 
a,
$$
which is derived from
$$
\underset{z \rightarrow \infty}{\lim} \frac az 
= 
\underset{z \rightarrow \infty}{\lim} e^{W(a/z)} W(a/z) 
\quad \Rightarrow \quad 
\underset{z \rightarrow \infty}{\lim} W(a/z) 
= 0.
$$
See \cite{BorweinLindstrom2016LambertFunc} for further details.

\section*{\textit{SM~C: Further empirical analysis}}\label{empirical}
In this section we present the reverse analysis to that presented in Section~\ref{application} of the paper; that
is, here NASDAQ is the response, whereas NYSE is taken as covariate. Figure~\ref{data_RL2} is
thus the equivalent of Fig.~\ref{data_RL} in the paper but for the reverse analysis; and the same applies to Table~\ref{quant_orig2}, which is the reverse analysis equivalent of Table~\ref{quant_orig} in the paper. Interpretations follow along the same lines as in
Section~\ref{application} of the paper.

\renewcommand{\thefigure}{SM.4}
\begin{figure}
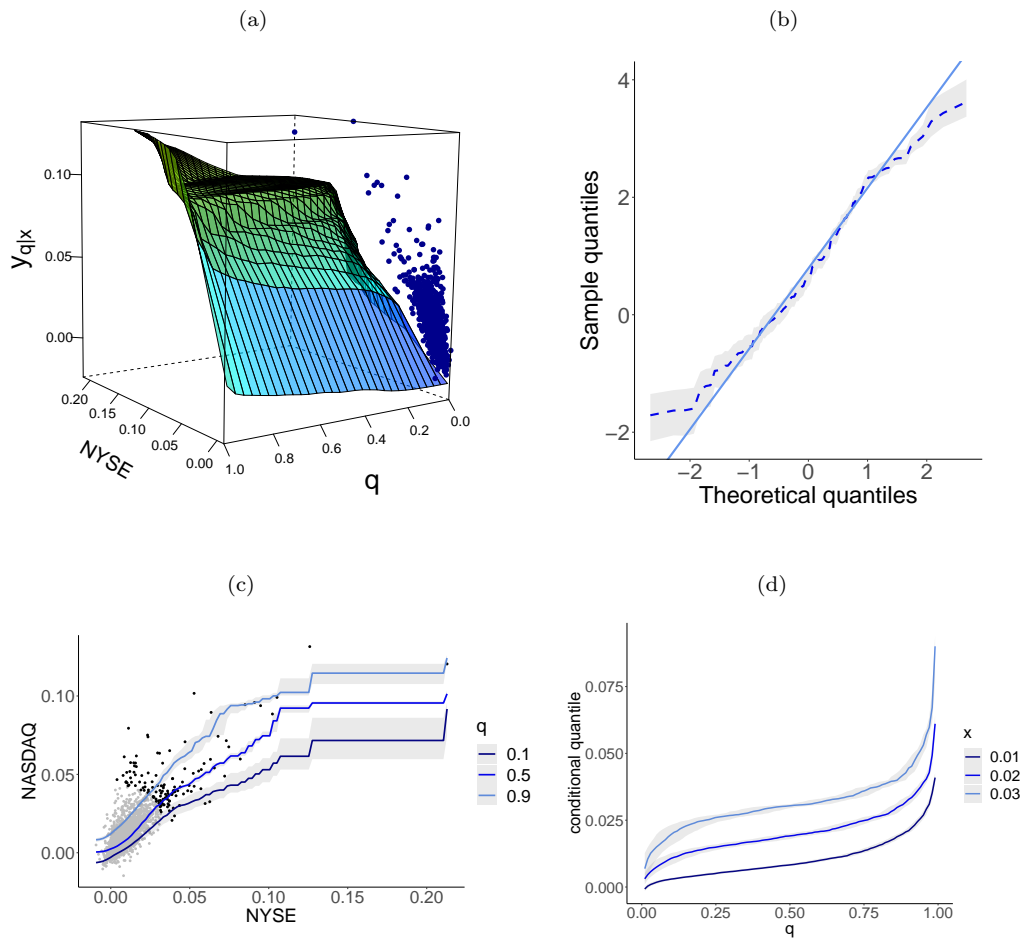

	\centering \hspace{.2cm}
	\begin{minipage}{0.48\linewidth} \hspace{3.5cm} \footnotesize (a)
	\end{minipage}
	\begin{minipage}{0.48\linewidth} \hspace{3.3cm} \footnotesize (b)
	\end{minipage}\\ \vspace{-.5cm}
	\begin{minipage}{0.48\linewidth}\hspace{.3cm}
		\includegraphics[scale=0.43]{RL_unfiltered2.pdf}  
	\end{minipage} 
	\begin{minipage}{0.48\linewidth}\hspace{0.6cm}
		\includegraphics[scale = .34]{randomized_q_exceedances2.pdf}  
	\end{minipage}
	\begin{minipage}{0.48\linewidth} \hspace{3.5cm} \footnotesize (c)
	\end{minipage}
	\begin{minipage}{0.48\linewidth} \hspace{3.3cm}  \footnotesize (d)
	\end{minipage}\\ 
	\begin{minipage}{0.4\linewidth} \hspace{-.4cm}    
		\includegraphics[scale=0.27]{cs_p_unfiltered2.pdf} 
	\end{minipage}    
	\begin{minipage}{0.4\linewidth}\hspace{.8cm}    
		\includegraphics[scale=0.24]{cs_x_unfiltered2.pdf}
	\end{minipage}
	\caption{\label{data_RL2} (a) Posterior mean regression manifold $\mathscr{L}$ for NASDAQ given NYSE along with joint negative log returns overlaid on one of the faces of the box. (b) QQ-plot of randomized quantile residuals; the dashed line represents the posterior mean plotted along with credible bands. (c) Posterior mean  regression lines $L_q$ for $q=\{0.1,0.5,0.9\}$ for NYSE given NASDAQ along with $95\%$ credible bands and plotted against joint negative log returns. (d) Posterior mean conditional quantile curves $\{y_{q \mid x}:q\in(0,1)\}$ of negative log returns on NASDAQ for $x=\{0.01,0.02,0.03\}$, along with $95\%$ credible bands,  corresponding to negative log returns on NYSE in the original margins.}
\end{figure}

\begin{table} \caption{Predicted $75\%$, $90\%$ and $95\%$ quantiles of losses on NASDAQ evaluated for $1\%$, $2\%$ and $3\%$ weekly maxima losses on NYSE, with $95\%$ credible intervals in brackets; negative log-returns used as proxy for losses}            
	\centering    
	\begin{threeparttable}
		\begin{tabular}{c c c c}  
			\hline 			\hline
			\multirow{2}{*}{NASDAQ} &
			\multicolumn{3}{c}{NYSE} \\ [0.5ex]
			\cline{2-4}
			& $0.01$ & $0.02$ & $0.03$\\ [0.5ex]
			\hline 
			$75\%$ & $0.0136$ & $0.0249$ & $0.0359$ \\
			& \footnotesize $(0.0128,0.0146)$ &  \footnotesize $(0.0236, 0.0262)$ & \footnotesize $(0.0339,0.0375)$ \\
			$90\%$ & $0.0210$ & $0.0328$ & $0.0439$  \\
			& \footnotesize $(0.0202,0.0224)$ &  \footnotesize $(0.0307,0.0341)$ & \footnotesize $(0.0413,0.0476)$ \\
			$95\%$ & $0.0268$ & $0.0389$ & $0.0529$
			\\
			& \footnotesize $(0.0254,0.0284)$ &  \footnotesize $(0.0371,0.0400)$ & \footnotesize $(0.0498,0.0570)$ \\
			[1ex]
			\hline 			\hline
		\end{tabular}
		\label{quant_orig2}   
	\end{threeparttable}
\end{table}

\section*{\textit{SM~D: Comparing exact and limiting regression manifold for logistic model}}
Here we illustrate how the exact and limiting regression manifold for logistic model compare; see Appendix~D for details on the derivation of these. As it can be seen from Figs.~\ref{comparison_1}--\ref{comparison_2}, the linearly approximated regression manifold derived in Appendix~D.1 in the paper offers a sensible approximation of the true regression manifold, for large values of $x$.

\renewcommand{\thefigure}{SM.5}
\begin{figure}
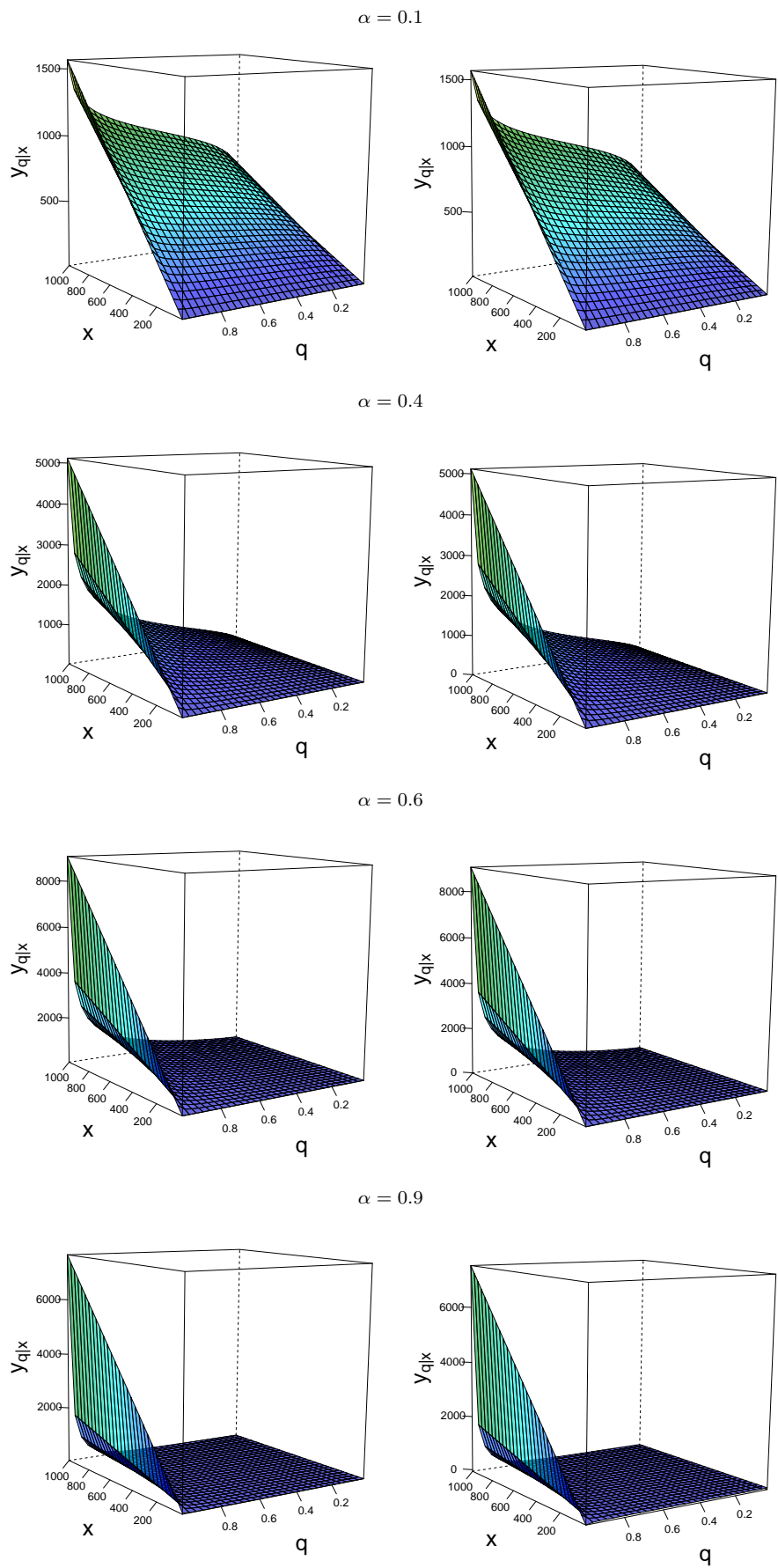

	\begin{center}
		\begin{center}
			\begin{footnotesize}
				$\alpha=0.1$
			\end{footnotesize}
		\end{center}
		\vspace{-1cm}
		\begin{minipage}[c]{7cm}\hspace{1cm}
			\includegraphics[width=1\textwidth]{RL_true_alpha_01.pdf}
			\text{\hspace{0cm}}
		\end{minipage} 
		\begin{minipage}[c]{7cm}
			\includegraphics[width=1\textwidth]{RL_lim_alpha_01.pdf}
		\end{minipage}
		\vspace{-1cm}
		\begin{center}
			\begin{footnotesize}
				$\alpha = 0.4$
			\end{footnotesize}
		\end{center}
		\vspace{-1cm}
		\begin{minipage}[c]{7cm}\hspace{1cm}
			\includegraphics[width=1\textwidth]{RL_true_alpha_04.pdf}
			\text{\hspace{0cm}}
		\end{minipage} 
		\begin{minipage}[c]{7cm}
			\includegraphics[width=1\textwidth]{RL_lim_alpha_04.pdf}
		\end{minipage}
		\vspace{-1cm}
		\begin{center}
			\begin{footnotesize}
				$\alpha=0.6$
			\end{footnotesize}
		\end{center}
		\vspace{-1cm}
		\begin{minipage}[c]{7cm}\hspace{1cm}
			\includegraphics[width=1\textwidth]{RL_true_alpha_06.pdf}
			\text{\hspace{0cm}}
		\end{minipage} 
		\begin{minipage}[c]{7cm}
			\includegraphics[width=1\textwidth]{RL_lim_alpha_06.pdf}
		\end{minipage}
		\vspace{-1cm}
		\begin{center}
			\begin{footnotesize}
				$\alpha=0.9$
			\end{footnotesize}
		\end{center}
		\vspace{-1cm}
		\begin{minipage}[c]{7cm}\hspace{1cm}
			\includegraphics[width=1\textwidth]{RL_true_alpha_09.pdf}
			\text{\hspace{0cm}}
		\end{minipage} 
		\begin{minipage}[c]{7cm}
			\includegraphics[width=1\textwidth]{RL_lim_alpha_09.pdf}
		\end{minipage}
		\vspace{-1cm}
	\end{center}
	\caption{\label{comparison_1} The true (left) and limiting (right) regression manifold for bivariate logistic model in order of decreasing dependence (from top to bottom) with the dependence parameter $\alpha = \{0.1,0.4,0.6,0.9\}$.}
\end{figure}

\renewcommand{\thefigure}{SM.6}
\begin{figure}
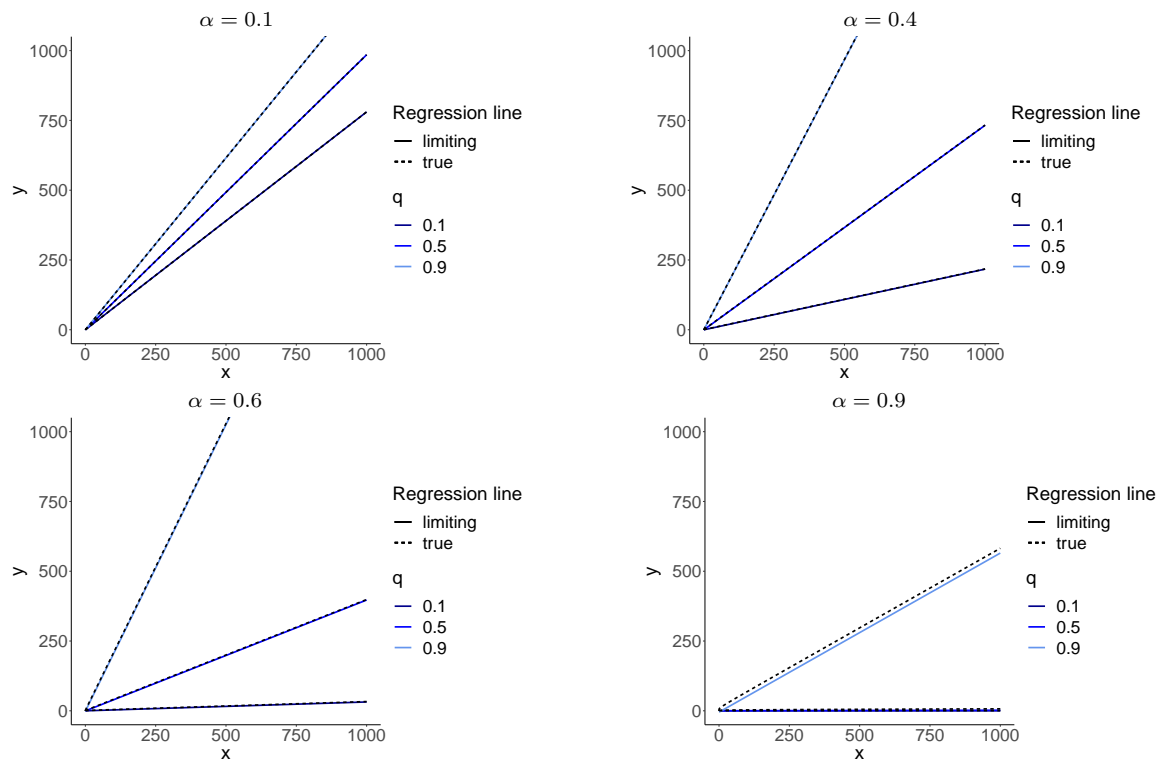

	\centering \hspace{.2cm}
	\begin{minipage}{0.48\linewidth} \hspace{2cm} \footnotesize $\alpha = 0.1$
	\end{minipage}
	\begin{minipage}{0.48\linewidth} \hspace{3.3cm} \footnotesize $\alpha = 0.4$
	\end{minipage}\\ 
	\begin{minipage}{0.48\linewidth}\hspace{-.4cm}
		\includegraphics[scale=0.27]{lim_alpha_01.pdf}  
	\end{minipage} 
	\begin{minipage}{0.48\linewidth}\hspace{0.6cm}
		\includegraphics[scale = .27]{lim_alpha_04.pdf}  
	\end{minipage}
	\begin{minipage}{0.48\linewidth} \hspace{2cm} \footnotesize $\alpha = 0.6$
	\end{minipage}
	\begin{minipage}{0.48\linewidth} \hspace{3.3cm}  \footnotesize $\alpha = 0.9$
	\end{minipage}\\ 
	\begin{minipage}{0.48\linewidth} \hspace{-.4cm}    
		\includegraphics[scale=0.27]{lim_alpha_06.pdf} 
	\end{minipage}    
	\begin{minipage}{0.48\linewidth}\hspace{.8cm}    
		\includegraphics[scale=0.27]{lim_alpha_09.pdf}
	\end{minipage}
	\caption{\label{comparison_2} Cross-sections of the true (black dashed line) and limiting (solid line) regression manifold for bivariate logistic model for $q=\{0.1,0.5,0.9\}$.}
\end{figure}


\bibliographystyle{spbasic_updated}
\bibliography{library.bib}

\end{document}